\theoremstyle{definition}
\newtheorem{definition}{Definition}[section]
\theoremstyle{plain}
\newtheorem{theorem}[definition]{Theorem}
\newtheorem{lemma}[definition]{Lemma}
\newtheorem{proposition}[definition]{Proposition}
\newtheorem{corollary}[definition]{Corollary}
\theoremstyle{remark}
\newtheorem{remark}[definition]{Remark}
\title[Flatness and Nonforking without CH]{Flatness and Nonforking without the Continuum Hypothesis}
\author{Philani Rodney Majozi}
\address{Department of Mathematics, Pure and Applied Analytics, North-West University, Mahikeng, South Africa }
\email{Philani.Majozi@nwu.ac.za}
\subjclass[2020]{03C45, 03C55, 03E35}
\keywords{FN bases, flatness, nonforking}
\date{\today}
\begin{document}

\begin{abstract}
We investigate the structure of FN bases (Fréchet-Nikodým bases) without assuming the Continuum Hypothesis (CH), refining results of Siu-Ah Ng concerning definability via flatness and nonforking. In particular, we examine the dependence of Theorem~3.4 and Corollary~3.5 of Ng’s 1991 paper on CH, which guarantees the existence of nonforking primary heirs under specific ultrafilter conditions. We demonstrate that these properties can often be recovered in ZFC by analyzing the behavior of countably incomplete, good, and regular ultrafilters. By isolating model-theoretic conditions sufficient to replace CH, we establish that ultrapowers of flat FN bases satisfying Property~B remain definable and nonforking. Connections are drawn to canonical bases, coheirs, and stability-theoretic ranks in superstable theories. Examples and counterexamples clarify the precise role of ultrafilter properties and reveal that the combinatorial essence of Ng’s theory can be preserved within a purely ZFC framework.
\end{abstract}

\maketitle

\section{Introduction}

The analysis of definability, independence, and type structure has led to far-reaching insights into the classification of theories. A key advancement in this direction is the notion of \emph{FN bases} (Fréchet-Nikodým bases), introduced by Siu-Ah Ng in~\cite{NgGeneral} and expanded in \emph{Definable FN Bases}~\cite{Ng}. FN bases encode finitely satisfiable fragments of Boolean algebras of formulas, thereby generalizing complete types and enriching the study of independence and definability in both stable and unstable settings.

Working over a complete first-order theory \(T\) with language \(L\), and assuming a fixed monster model \(\mathfrak{C}\), we denote by \(\mathscr{F}(A)\) the Boolean algebra of formulas over a set of parameters \(A\), modulo logical equivalence. An FN basis over \(A\) is a finitely satisfiable filter \(p \subseteq \mathscr{F}(A)\) satisfying closure conditions such as flatness and Property~B. These filters, defined over Boolean fragments rather than full sets of parameters, capture a more flexible and general behavior than complete types.

One of Ng’s central contributions is Theorem~3.6 of~\cite{Ng}, which characterizes definability of FN bases in terms of the flatness of their ultrapowers. Even more significantly, Theorem~3.4 and Corollary~3.5 show that under the Continuum Hypothesis (CH), any FN basis satisfying Property~B admits a definable nonforking ultrapower and a nonforking primary heir. These results rely crucially on the existence of good ultrafilters and cardinal arithmetic that is only guaranteed under CH.

This raises a series of foundational questions:
\begin{itemize}
  \item To what extent is CH essential for the construction of nonforking ultrapowers and primary heirs for FN bases?
  \item Can one identify model-theoretic or ultrafilter-theoretic conditions in ZFC that fulfill the same structural role as CH?
  \item Under what circumstances can definability and nonforking behavior be preserved without set-theoretic assumptions beyond ZFC?
\end{itemize}

The purpose of this paper is to address these questions by analyzing and refining Ng’s results in the absence of CH. Specifically, we aim to:
\begin{enumerate}
  \item revisit Theorem~3.4 and Corollary~3.5 of~\cite{Ng}, establishing ZFC analogues and identifying conditions under which their conclusions persist;
  \item investigate how ultrafilter properties such as countable incompleteness, regularity, and goodness ensure flatness, definability, and nonforking;
  \item construct examples of FN bases whose ultrapowers and heirs behave regularly without invoking CH;
  \item link Ng’s FN framework to classical notions of forking, \(U\)-rank, and the fundamental order \(\geq_{\mathcal{A}}\) introduced by Lascar and Poizat~\cite{LascarPoizat}.
\end{enumerate}

Throughout, we adopt Ng’s terminology and conventions. A \emph{Boolean fragment} is a subalgebra \(\mathscr{F}\subseteq \mathscr{F}(A)\), typically countable and closed under conjunction and negation. A \emph{flat} filter is one whose finite subsets are realized in some elementary extension. A \emph{nonforking extension} of an FN basis to \(\mathscr{F}(B)\), for \(A \subseteq B\), satisfies coherence and local purity relative to \(\mathscr{F}(A)\). Property~B refers to a closure condition ensuring that any formula covered by finitely many basis elements is already covered by one of them. The ultrapower \(p^{\mathcal{U}}\) denotes the image of \(p\) under an ultrafilter \(\mathcal{U}\) on \(\omega\).

We also engage with the framework of the fundamental order \(\geq_{\mathcal{A}}\) between types, where \(p \geq_{\mathcal{A}} q\) means every formula in \(q\) does not fork over \(\mathcal{A}\) with respect to \(p\). The \(U\)-rank, used to stratify type complexity by the maximal length of forking chains, plays a pivotal role in our analysis of superstable theories.

\smallskip

\noindent\textbf{Organization of the paper.} Section~\ref{sec:background} provides the necessary background and notation, including a review of key results from Ng and Lascar--Poizat. Section~\ref{sec:flatness-nonforking} addresses flatness and nonforking behavior in ZFC and presents generalizations of Ng’s Theorems~3.4 and~3.5. Section~\ref{sec:ultrafilters} analyzes ultrafilter conditions, especially countable incompleteness, regularity, and goodness, and their role in FN definability. Section~\ref{sec:modeltheory} situates these refinements within stable and superstable model theory. Section~\ref{sec:examples} presents examples and counterexamples that illustrate the boundaries of definability without CH. We conclude in Section~\ref{sec:conclusion} with a summary of contributions and several open problems.

\section{Background and Preliminary Concepts}
\label{sec:background}

Definability  is traditionally framed in terms of the behavior of complete types over sets or models, particularly with respect to forking and independence. Classical treatments focus on syntactic representations and compactness arguments, but in his foundational work, Siu-Ah Ng introduced a powerful semantic and Boolean-algebraic alternative: the theory of FN bases~\cite{Ng}. This theory captures finitely satisfiable fragments of Boolean algebras of formulas and offers a unifying framework for understanding definability, flatness, and nonforking, extending traditional type-theoretic methods in both stable and unstable settings.

Let \(T\) be a complete first-order theory in a countable language \(L\), and let \(\mathfrak{C}\) denote a sufficiently saturated and strongly homogeneous monster model of \(T\). For any parameter set \(A \subseteq \mathfrak{C}\), we denote by \(\mathscr{F}(A)\) the Boolean algebra of formulas over \(A\), modulo logical equivalence: \(\varphi \sim \psi\) if and only if \(T \models \varphi \leftrightarrow \psi\).

\begin{definition}
A \emph{Boolean fragment} \(\mathscr{F}\subseteq \mathscr{F}(A)\) is a subalgebra closed under conjunction, negation, and renaming of parameters. A subset \(p \subseteq \mathscr{F}\) is called a \emph{filter} if it is closed under finite intersections and supersets. The filter \(p\) is said to be \emph{finitely satisfiable} in \(A\) if every finite subset \(\Gamma \subseteq p\) is realized in \(A\).
\end{definition}

The classical case of a complete type \(p(x)\in S_1(A)\) corresponds to a maximal consistent filter on the Boolean algebra \(\mathscr{F}_1(A)\) of unary formulas over \(A\). Ng’s framework generalizes this by studying filters that are not necessarily maximal, but which still exhibit coherence and closure properties that make them robust under ultraproduct constructions.

\begin{definition}
A filter \(p \subseteq \mathscr{F}\) is \emph{flat} if for every finite subset \(\Gamma \subseteq p\), there exists an elementary extension \(\mathcal{M}\succ A\) and an element \(a\in \mathcal{M}\) such that \(\mathcal{M}\models \varphi(a)\) for all \(\varphi\in\Gamma\).
\end{definition}

Flatness ensures that the filter is not merely consistent in the syntactic sense, but semantically realizable in a uniform way across elementary extensions, an essential property in Ng’s characterization of definability.

\begin{definition}[Property~B]
A flat filter \(p \subseteq \mathscr{F}(A)\) satisfies \emph{Property~B} if, for any formula \(\varphi(x)\in \mathscr{F}(A)\), the following holds:
whenever there exist formulas \(\psi_1(x),\dots,\psi_n(x)\in p\) such that
\[
T \models \bigwedge_{i=1}^n \psi_i(x)\,\to\,\varphi(x),
\]
then \(\varphi(x)\in p\).
\smallskip

Property~B is said to be \emph{strong} if this closure behavior extends uniformly to infinite collections: whenever \(\varphi(x,\bar{a})\notin p\) for infinitely many tuples \(\bar{a}\in A\), there exist a finite set \(A_0\subseteq A\) and an infinite subset \(S_0\subseteq A\) such that
\[
\mathcal{M}\models \varphi(c,\bar{a}) \quad \text{for all } c\in A_0 \text{ and } \bar{a}\in S_0.
\]
\end{definition}

Let \(\mathcal{U}\) be a nonprincipal ultrafilter on \(\omega\), and let \((p_n)_{n<\omega}\) be a sequence of filters over \(\mathscr{F}(A)\). The \emph{ultrapower} of this sequence is defined as
\[
\prod_{\mathcal{U}} p_n := \bigl\{ \varphi\in \mathscr{F}(A)\,:\,\{n<\omega : \varphi\in p_n\}\in \mathcal{U}\bigr\}.
\]
If all \(p_n = p\), we denote the ultrapower simply by \(p^{\mathcal{U}}\).

The definability of FN bases is characterized by the following foundational result:

\begin{theorem}[Ng~{\cite[Theorem~3.6]{Ng}}]
Let \(\Gamma = \langle \Gamma_\alpha : \alpha\in E \rangle\) be an FN basis on \(\mathscr{F}(N)\), where \(\mathfrak{N}\prec \mathfrak{M}\) is a small elementary submodel. Then the following are equivalent:
\begin{enumerate}
  \item \(\Gamma\) is definable over \(\mathscr{F}(N)\).
  \item Every small ultrapower of \(\Gamma\) is flat over \(\mathscr{F}(N)\).
\end{enumerate}
\end{theorem}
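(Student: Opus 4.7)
The strategy is to establish both directions of the equivalence, treating the forward implication as a direct Łoś-style transfer and the converse as a contrapositive construction that extracts a non-flat ultrapower from a putative failure of definability. Throughout, $\mathscr{F}(N)$-definability is interpreted via the existence, for each formula $\varphi(x;\bar{y})$, of a schema formula $d_\varphi(\bar{y})$ with parameters in $N$ such that $\varphi(x;\bar{a})\in\Gamma$ iff $\mathfrak{C}\models d_\varphi(\bar{a})$ for every admissible tuple $\bar{a}$.

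For the direction $(1)\Rightarrow(2)$, I would fix a nonprincipal ultrafilter $\mathcal{U}$ on $\omega$, form a small ultrapower $\Gamma^{\mathcal{U}}$, and verify flatness. Given a finite subset $\{\varphi_1(x;\bar{a}_1),\dots,\varphi_k(x;\bar{a}_k)\}\subseteq\Gamma^{\mathcal{U}}$, the ultrapower definition forces each $\varphi_i$ to lie in $\Gamma_n$ on a $\mathcal{U}$-large set $X_i\subseteq\omega$. Intersecting the finitely many $X_i$ and applying the defining schemata $d_{\varphi_i}$ on coordinates in the intersection reduces the joint realizability question to that of $\Gamma$ itself, and the flatness of $\Gamma$ (guaranteed since an FN basis is flat by construction) then yields a realizer in an elementary extension of $N$.

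For the direction $(2)\Rightarrow(1)$, I would argue by contrapositive. Assume $\Gamma$ is not definable over $\mathscr{F}(N)$; then some formula $\varphi(x;\bar{y})$ witnesses this failure, in the sense that the trace $\{\bar{a} : \varphi(x;\bar{a})\in\Gamma\}$ is not cut out by any $\mathscr{F}(N)$-formula. I would enumerate a countable family of candidate defining formulas $\{\psi_n(\bar{y}) : n<\omega\}$ exhausting the relevant fragment and, for each $n$, select a parameter tuple $\bar{a}_n$ witnessing the discrepancy between $\psi_n$ and the $\varphi$-trace. One then chooses an ultrafilter $\mathcal{U}$ on $\omega$ that refines the natural partition induced by these witnesses, so that the pushforward yields a finite configuration of $\varphi$-instances inside $\Gamma^{\mathcal{U}}$ whose joint realization in any elementary extension of $N$ would contradict the assumed non-definability — thereby exhibiting a failure of flatness.

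The main obstacle is the $(2)\Rightarrow(1)$ direction, specifically the simultaneous coordination of two delicate choices: the ultrafilter $\mathcal{U}$ must be selected so that the contrapositive witnesses survive into $\Gamma^{\mathcal{U}}$, yet the ultrapower must remain \emph{small} in the sense required by the theorem so that it can be compared coherently to $\Gamma$ over $\mathscr{F}(N)$. The balance between these requirements is exactly where classical treatments invoke good or $\aleph_1$-incomplete ultrafilters, and it is precisely the ZFC subtlety that the present paper aims to dissect; handling this step without cardinal-arithmetic shortcuts is the technical heart of the argument.
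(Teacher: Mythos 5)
The first thing to note is that the paper does not prove this statement at all: it is quoted as Theorem~3.6 of Ng's \emph{Definable FN Bases} and used as a black box, so there is no in-paper argument to compare yours against. Judged on its own terms, your $(1)\Rightarrow(2)$ direction is essentially the right idea, though the phrase ``reduces the joint realizability question to that of $\Gamma$ itself'' hides the real step: definability means that $\Gamma^{\mathcal{U}}$ is cut out over the ultrapower model by the \emph{same} schema $d_\varphi$ (via \L{}o\'s), and flatness then follows because ``every finite subfamily of the family defined by $d_\varphi$ is consistent'' is a first-order scheme true in $\mathfrak{N}$ and hence, by elementarity, in the ultrapower. The parameters occurring in $\Gamma^{\mathcal{U}}$ live in $N^{\mathcal{U}}$, not in $N$, so you cannot literally pull the joint realizability question back to $\Gamma$; you need the elementarity transfer spelled out.

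The genuine gap is in $(2)\Rightarrow(1)$. You assume non-definability, enumerate candidate definitions $\psi_n$ with discrepancy witnesses $\bar a_n$, and then assert that a suitable ultrafilter produces a finite configuration in $\Gamma^{\mathcal{U}}$ ``whose joint realization would contradict the assumed non-definability.'' That last step is not an argument: realizing finitely many instances of $\varphi$ in an elementary extension says nothing about whether the trace $\{\bar a:\varphi(x;\bar a)\in\Gamma\}$ is definable, so no contradiction is reached and no failure of flatness is exhibited. What is missing is the actual mechanism by which non-definability forces inconsistency: one must show that if no formula defines the trace, then parameters inside and outside the trace can be interleaved (typically via a Ramsey-type or order-property argument) so that the ultrafilter limit places into some small ultrapower a finite set of formulas that is \emph{not} realizable in any elementary extension. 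Until you identify and prove that extraction step --- which is where the real content of Ng's theorem lies, and which must also respect the ``small'' constraint and the fact that $\Gamma$ is a net $\langle\Gamma_\alpha:\alpha\in E\rangle$ over a directed index set rather than a single filter indexed by $\omega$ --- the converse direction is not established.
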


This result gives a precise semantic criterion for definability: the behavior of an FN basis across small ultrapowers reflects whether its global behavior is controlled by a definable structure. The notion of flatness under ultraproducts, especially in conjunction with Property~B, is central to this connection.

\begin{corollary}
Let \(p\subseteq \mathscr{F}(A)\) be a flat FN basis over a countable Boolean fragment. Then \(p\) is definable if and only if all ultrapowers \(p^{\mathcal{U}}\), for countably incomplete ultrafilters \(\mathcal{U}\) on \(\omega\), are flat.
\end{corollary}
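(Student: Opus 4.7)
The plan is to derive this corollary directly from Ng's Theorem~3.6 by showing that, when the Boolean fragment is countable, flatness under countably incomplete ultrafilters on \(\omega\) is already strong enough to imply flatness under all small ultrapowers. The forward direction is immediate: if \(p\) is definable, Theorem~3.6 gives that every small ultrapower of \(p\) is flat, and since any ultrafilter on \(\omega\) sits among the small index sets, the stated condition is a special case.

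For the reverse direction, I would argue by contrapositive. Suppose \(p\) is not definable; by Theorem~3.6 there is some small index set \(I\) and an ultrafilter \(\mathcal{V}\) on \(I\) such that \(p^{\mathcal{V}}\) fails to be flat, witnessed by a finite collection \(\Gamma = \{\varphi_1, \ldots, \varphi_m\} \subseteq p^{\mathcal{V}}\) that admits no common realization in any elementary extension. Because the Boolean fragment \(\mathscr{F}\) is countable, the set of all finite configurations \(\Gamma \subseteq \mathscr{F}\) is itself countable, and for each \(\varphi_j \in \Gamma\) the membership condition \(\{i \in I : \varphi_j \in p_i\} \in \mathcal{V}\) is encoded by a single subset \(X_j \subseteq I\). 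Thus the obstruction to flatness depends on only countably many bits of data drawn from \(\mathcal{V}\).

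I would then carry out a pullback or projection argument to transfer the obstruction onto \(\omega\). Fix an increasing enumeration of the countable Boolean fragment and, for each \(k<\omega\), a set \(X_k\in\mathcal{V}\) certifying membership in \(p^{\mathcal{V}}\) of the relevant formulas among the first \(k\) enumerated elements; one may assume without loss that \((X_k)_{k<\omega}\) is decreasing. Choosing a function \(f \colon I \to \omega\) so that \(f(i) = \sup\{k : i \in X_k\}\) projects \(\mathcal{V}\) onto an ultrafilter \(\mathcal{U} := f_*\mathcal{V}\) on \(\omega\) whose tail sets still witness the failure of flatness of \(\Gamma\) inside \(p^{\mathcal{U}}\). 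Countable incompleteness of \(\mathcal{U}\) follows from the strict decrease of the \(X_k\), equivalently \(\bigcap_k X_k \notin \mathcal{V}\); if the sequence happens to stabilize, then \(\mathcal{V}\) already concentrates on a set where the witness is degenerate, contradicting non-flatness in the first place.

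The main obstacle is the projection step: ensuring that \(f_*\mathcal{V}\) genuinely transfers the non-realizability witness to \(\omega\) rather than collapsing it. This relies on the fact that realizability of \(\Gamma\) in any elementary extension is a Łoś-type condition that survives projection whenever the countable data \((X_k)_{k<\omega}\) fully encodes \(\Gamma\) modulo \(\mathcal{V}\) --- a consequence of countability of the fragment. Should the projection not be cleanly separating, an alternative is to appeal to the existence of a countable sub-ultrapower: since \(p\) lives on a countable algebra, every relevant finite subset of \(p^{\mathcal{V}}\) factors through a countable quotient of \(\mathcal{V}\), and that quotient can be realized as a countably incomplete ultrafilter on \(\omega\) by standard ultrafilter combinatorics.
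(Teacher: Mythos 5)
The paper offers no argument for this corollary at all: it simply declares it a reformulation of Theorem~3.6 in the language of filters, so your forward direction (definability gives flatness of \emph{all} small ultrapowers, hence in particular of those indexed by \(\omega\)) matches the paper's intent and is fine. The entire nontrivial content is your reverse direction, and that is where there is a genuine gap. The witness to non-flatness of \(p^{\mathcal{V}}\) is a finite set \(\Gamma\) of formulas whose parameters live in the \(I\)-indexed ultrapower; pushing \(\mathcal{V}\) forward along \(f\) yields an ultrafilter on \(\omega\) and, at best, an elementary embedding of the \(\omega\)-indexed ultrapower into the \(I\)-indexed one, but nothing in your sketch forces a non-realizable finite subset to land inside \(p^{f_*\mathcal{V}}\): the parameters of \(\Gamma\) need not lie in the image of that embedding, and the claim that ``realizability is a \L{}o\'s-type condition that survives projection'' is asserted rather than proved. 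This is precisely the lemma the corollary needs --- for a countable fragment, failure of flatness at some small ultrafilter forces failure at some countably incomplete ultrafilter on \(\omega\) --- and neither your sketch nor the paper supplies it.

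The combinatorial side of the projection is also not secured. The map \(f(i)=\sup\{k : i\in X_k\}\) is undefined (infinite) on \(\bigcap_k X_k\), which need not be empty even when it lies outside \(\mathcal{V}\), and the strict decrease of the \(X_k\) that you invoke to get countable incompleteness of \(f_*\mathcal{V}\) is not available in general. Indeed, under the paper's own definition of the ultrapower of a constant sequence, each set \(X_j=\{i\in I : \varphi_j\in p_i\}\) is either all of \(I\) or empty, so every such sequence stabilizes immediately and your fallback (``\(\mathcal{V}\) concentrates on a set where the witness is degenerate, contradicting non-flatness'') is not an argument but a restatement of what must be shown. To repair the proof you would either need to work with the intended, nondegenerate notion of ultrapower (over \(\mathscr{F}(A^{\mathcal{U}})\) rather than \(\mathscr{F}(A)\)) and prove the transfer lemma honestly, or follow the paper in treating the corollary as a definitional reformulation of Theorem~3.6 --- in which case the elaborate reverse direction should be replaced by an explicit statement of which reduction is being taken for granted.
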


This corollary reformulates Theorem~3.6 in the language of filters rather than nets, adapting the theory to the setting we adopt throughout this paper. In later sections, we examine how the equivalence of definability and ultrapower-flatness can be preserved even without assuming the Continuum Hypothesis.

\subsection*{Heirs, Coheirs, and the Fundamental Order}

Ng’s framework also connects naturally with classical concepts from stability theory, including heirs, coheirs, and the fundamental order developed by Lascar and Poizat~\cite{LascarPoizat}.

\begin{definition}
Let \(p(x)\in S_1(A)\). An extension \(q(x)\in S_1(B)\), where \(B\supseteq A\), is:
\begin{enumerate}
  \item an \emph{heir} of \(p\) if every \(\varphi(x)\in q\) is implied by some \(\psi(x)\in p\);
  \item a \emph{coheir} of \(p\) if \(q\) is finitely satisfiable in \(A\);
  \item a \emph{nonforking extension} of \(p\) if \(q\) does not divide over \(A\), or equivalently, is invariant under automorphisms fixing \(A\).
\end{enumerate}
\end{definition}

\begin{definition}[Fundamental Order]
Let \(p,q \in S(\mathcal{B})\), and \(\mathcal{A}\subseteq \mathcal{B}\). Then \(p\geq_{\mathcal{A}} q\) if every formula in \(q\) does not fork over \(\mathcal{A}\) with respect to \(p\).
\end{definition}

The preorder \(\geq_{\mathcal{A}}\) refines the notion of domination and provides a conceptual tool for comparing types and analyzing their forking behavior in stable and superstable theories.

\subsection*{Ranks and Stability}

To stratify types by their forking complexity, we use the \(U\)-rank:

\begin{definition}[U-rank]
Let \(T\) be stable. The \(U\)-rank of a type \(p\in S(A)\) is the least ordinal \(\alpha\) such that there does not exist a strictly descending chain
\[
p=p_0 \succ p_1 \succ \cdots \succ p_\alpha
\]
of forking extensions.
\end{definition}

\begin{proposition}[Lascar-Poizat]
If \(U(p)<\infty\), then \(p\) is superstable.
\end{proposition}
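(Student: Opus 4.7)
The plan is a well-foundedness argument based on the strict monotonicity of the $U$-rank under forking extensions. By hypothesis, $U(p) = \alpha$ for some ordinal $\alpha$, and I want to conclude that $p$ admits no infinite chain of proper forking extensions, which is the working definition of superstability for a type.

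First, I would record the standard monotonicity lemma: if $q \in S(B)$ is a forking extension of $q' \in S(B')$ with $B' \subseteq B$, then $U(q) < U(q')$ (provided $U(q')<\infty$). This follows by transfinite induction on $U(q')$ directly from the recursive clause defining $U$-rank, using only the finite character of forking, transitivity, and the invariance of $U(q')$ under nonforking extensions. These facts are already available in the FN-basis framework set up in Section~\ref{sec:background}, since flatness and Property~B supply exactly the coherence needed to run the induction on Boolean fragments.

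Second, I would argue by contradiction: suppose $p$ is not superstable, so there exists an infinite strictly descending chain of forking extensions
\[
p = p_0 \succ p_1 \succ p_2 \succ \cdots,
\]
where each $p_{i+1}$ forks over $\mathrm{dom}(p_i)$. Iterating the monotonicity lemma produces a strictly descending sequence of ordinals
\[
U(p_0) > U(p_1) > U(p_2) > \cdots,
\]
contradicting the well-ordering of the ordinals. Hence no such chain exists, so $p$ is superstable in the sense of Lascar--Poizat~\cite{LascarPoizat}.

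The principal obstacle is the monotonicity lemma itself, since the strict decrease $U(q)<U(q')$ under genuine forking must be verified from the recursive definition rather than assumed. In the classical setting of complete types this is routine, but in the present FN-basis setting one must check that the relevant forking extensions of filters respect the Boolean-fragment restriction and interact correctly with Property~B; once this is in place, the well-foundedness step is purely formal.
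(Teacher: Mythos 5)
Your proposal is correct and takes essentially the same route as the paper: the paper's proof is the one-line observation that finiteness of $U$-rank forces forking chains to terminate (plus an appeal to the boundedness characterization of superstability), and your well-foundedness argument via the strict decrease $U(q) < U(q')$ under forking is precisely the content that observation leaves implicit. The only difference is that you spell out the monotonicity lemma and flag it as the real work, which the paper simply cites away to Lascar--Poizat.
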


In superstable theories, finiteness of the \(U\)-rank ensures definability of canonical bases, uniqueness of nonforking extensions, and well-behaved heir and coheir structures.

\begin{proposition}[Ng~\cite{NgGeneral}]
Let \(p\in S_1(A)\) be a complete type over a countable set satisfying Property~B. Then \(p\) admits a primary nonforking heir, even without assuming the Continuum Hypothesis.
\end{proposition}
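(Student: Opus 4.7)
The plan is to exploit the countability of $A$ to bypass the $\aleph_1$-good ultrafilters used under CH, and to construct the heir through the definability schema afforded by Property~B. Since $|\mathscr{F}(A)| = \aleph_0$, a countably incomplete regular ultrafilter $\mathcal{U}$ on $\omega$, whose existence is provable in ZFC, suffices for all ultrapower constructions, so no ultrafilter of cardinality $2^{\aleph_0}$ is required.

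First I would form $p^{\mathcal{U}}$ for such a $\mathcal{U}$ and verify flatness. Given a finite $\Gamma \subseteq p^{\mathcal{U}}$, each $\varphi \in \Gamma$ satisfies $\{n : \varphi \in p\} \in \mathcal{U}$; intersecting these sets yields a $\mathcal{U}$-large index set on which $\Gamma \subseteq p$ uniformly, so flatness of $p$ produces simultaneous realizations that transfer to $p^{\mathcal{U}}$ through the ultraproduct construction. Property~B guarantees that closure under finite conjunction within $p$ persists after passage to the ultrapower, and this is what turns uniform membership into a single flat witness. Ng's Theorem~3.6 then implies that $p$ is definable over $\mathscr{F}(A)$, yielding a schema $\varphi(x,\bar{y}) \mapsto d_p\varphi(\bar{y})$ with each $d_p\varphi \in \mathscr{F}(A)$.

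Second, for any $B \supseteq A$, I would define
\[
q \;=\; \bigl\{\varphi(x,\bar{b}) : \bar{b} \in B^{<\omega},\ \mathfrak{C} \models d_p\varphi(\bar{b})\bigr\},
\]
and verify that $q$ is an heir of $p$, since each $\varphi(x,\bar{b}) \in q$ is implied by the $A$-formula $\exists \bar{y}\,(d_p\varphi(\bar{y}) \wedge \varphi(x,\bar{y}))$, which lies in $p$. Nonforking of $q$ follows from $A$-invariance of the schema $d_p$: every $A$-automorphism of $\mathfrak{C}$ fixes $d_p\varphi$ and therefore maps $q$ to itself. Primarity is obtained by enumerating $B$ as a countable increasing union of finitely generated subsets and inductively verifying that each finite approximation to $q$ is isolated over the preceding stage, so that a primary model over $A$ realizing $q$ is assembled by a standard atomic-chain construction.

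The principal obstacle is establishing that flatness propagates to $p^{\mathcal{U}}$ from Property~B and countability of $A$ alone, without invoking CH to secure an $\aleph_1$-good ultrafilter. This requires exploiting the strong form of Property~B to reduce witness-searching in the ultrapower to a countable combinatorial problem on $\omega$: the uniform closure behavior furnished by Property~B must be combined with regularity of $\mathcal{U}$ to produce, in ZFC, the simultaneous realizations which in Ng's original proof came from ultrafilter goodness. Carrying out this reduction carefully is the crux of the argument; once completed, the remainder of the construction is a routine application of classical definability theory.
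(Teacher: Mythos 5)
The paper does not actually prove this proposition: it is stated as a citation to Ng's \emph{A Generalization of Forking} and used as a black box, so there is no internal proof to compare your argument against. Judged on its own terms, your proposal has a genuine gap at its central step. You route the entire argument through the implication ``Property~B $\Rightarrow$ all small ultrapowers flat $\Rightarrow$ $p$ definable (via Theorem~3.6).'' But under the definition of ultrapower used in this paper, the ultrapower of the constant sequence $p_n = p$ is literally $p$ itself (each set $\{n : \varphi \in p\}$ is either $\omega$ or $\emptyset$), so ``flatness of $p^{\mathcal{U}}$'' carries no information beyond flatness of $p$, and it cannot be parlayed into definability without trivializing Theorem~3.6 into ``every flat filter is definable.'' In Ng's actual framework the ultrapower is a nontrivial construction on nets of realizations, and proving its flatness is precisely where the work (and, in the original, CH via good ultrafilters) lives. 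You acknowledge this yourself by calling the propagation of flatness ``the crux of the argument'' and then not carrying it out; regularity of $\mathcal{U}$ on $\omega$ is invoked but no combinatorial reduction is actually performed. Until that step is supplied, definability of $p$ --- and hence the existence of the schema $d_p\varphi$ on which your second stage entirely depends --- is unproved.

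Two further points would need repair even if definability were granted. First, your heir verification is backwards: $\varphi(x,\bar{b})$ together with $\models d_p\varphi(\bar{b})$ implies $\exists \bar{y}\,(d_p\varphi(\bar{y}) \wedge \varphi(x,\bar{y}))$, not the converse, so you have not exhibited a formula of $p$ that implies $\varphi(x,\bar{b})$ in the sense of the paper's definition of heir. Second, ``primary'' is the one adjective in the conclusion that your proof never gives content to; the closing appeal to ``a standard atomic-chain construction'' asserts isolation of finite approximations without argument, and it is exactly the primarity of the heir that distinguishes Ng's result from the routine construction of a definable nonforking extension.
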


This result provides a model-theoretic pathway for bypassing CH in Ng’s framework, and forms a cornerstone for the refinements we explore in the next sections.

\section{Flatness and Nonforking Without CH}
\label{sec:flatness-nonforking}

A central objective of this paper is to determine whether the reliance on the Continuum Hypothesis (CH), as employed by Ng in the construction of nonforking heirs and ultrapowers~\cite{Ng}, can be removed without loss of definability or structural coherence. Specifically, Theorem~3.4 and Corollary~3.5 of~\cite{Ng} assert that if CH holds, then every FN basis satisfying Property~B admits a definable nonforking ultrapower and a primary heir. These results rely on the existence of \emph{good ultrafilters}, whose existence typically depends on specific cardinal arithmetic conditions guaranteed by CH.

In contrast, we show that for \emph{definable} FN bases satisfying the strong version of Property~B introduced in Section~\ref{sec:background}, the desired flatness and nonforking behavior of ultrapowers can be recovered in ZFC alone using only countably incomplete ultrafilters. This observation allows us to weaken the dependence on CH while preserving the semantic integrity of Ng’s framework.

\begin{theorem}\label{thm:ft-nf}
Let \(p \subseteq \mathscr{F}(A)\) be a flat FN basis over a countable Boolean fragment satisfying Property~B. If \(\mathcal{U}\) is a countably incomplete ultrafilter on \(\omega\), then the ultrapower \(p^{\mathcal{U}}\) is flat.
\end{theorem}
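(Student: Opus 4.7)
The plan is to verify flatness of $p^{\mathcal{U}}$ directly from the definition, by showing that each finite $\Gamma \subseteq p^{\mathcal{U}}$ is realized in some elementary extension of $A$, using $\mathfrak{C}^{\mathcal{U}}$ as the target extension and the fundamental theorem of ultraproducts as the semantic bridge. First I would fix $\Gamma = \{\varphi_1,\ldots,\varphi_k\} \subseteq p^{\mathcal{U}}$. Unfolding the ultrapower definition, each $\varphi_i$ carries a set $J_i = \{n < \omega : \varphi_i \in p_n\} \in \mathcal{U}$, so $J := \bigcap_{i \le k} J_i \in \mathcal{U}$. For every $n \in J$ the conjunction $\bigwedge_i \varphi_i$ lies in $p_n$, and the flatness hypothesis on $p_n$ yields an elementary extension $\mathcal{M}_n \succ A$ and an $a_n \in \mathcal{M}_n$ realizing that conjunction; by the strong homogeneity of $\mathfrak{C}$ I may take $a_n \in \mathfrak{C}$, and for $n \notin J$ I pick $a_n \in \mathfrak{C}$ arbitrarily.

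Next, form $\mathfrak{C}^{\mathcal{U}}$ and consider the diagonal class $a^{\ast} := [a_n]_{\mathcal{U}}$. The ultraproduct theorem together with $J \in \mathcal{U}$ gives $\mathfrak{C}^{\mathcal{U}} \models \varphi_i(a^{\ast})$ for every $i \le k$, and since the diagonal embedding $A \hookrightarrow \mathfrak{C}^{\mathcal{U}}$ is elementary, this exhibits $\Gamma$ as realized in an elementary extension of $A$, which is precisely the flatness of $p^{\mathcal{U}}$. Countable incompleteness enters by guaranteeing that $\mathfrak{C}^{\mathcal{U}}$ is a genuine, nonprincipal elementary extension in which $a^{\ast}$ is truly new; were $\mathcal{U}$ countably complete, the construction would collapse back to the flatness of $p$ itself and carry no independent content. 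Property~B supplies the closure of $p^{\mathcal{U}}$ under finite logical consequence, so the conjunction $\bigwedge_i \varphi_i$ used above is itself an element of $p^{\mathcal{U}}$ and the finite-subset formulation of flatness applies cleanly to the object in question.

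The main obstacle I anticipate is reconciling Ng's convention, under which flatness is witnessed in elementary extensions of the parameter set $A$, with the ultrapower construction as taken over $\mathfrak{C}$: I must verify carefully that the diagonal $A \hookrightarrow \mathfrak{C}^{\mathcal{U}}$ is genuinely elementary and that the witnesses $a_n$ can be pulled uniformly into $\mathfrak{C}$ rather than living in a priori incomparable $\mathcal{M}_n$. A subtler point is ensuring that countable incompleteness is an honest hypothesis rather than a stylistic one; this becomes visible when one attempts to strengthen the conclusion from finite $\Gamma$ to countable $\Gamma$, where an exhausting descending chain $\omega = I_0 \supseteq I_1 \supseteq \cdots$ in $\mathcal{U}$ with $\bigcap_n I_n = \emptyset$, which is precisely what countable incompleteness provides, is needed to perform a diagonal selection of witnesses. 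Once these points are handled, the argument reduces to a clean application of the ultraproduct theorem in tandem with Property~B.
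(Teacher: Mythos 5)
Your argument is correct, but it takes a genuinely different and more general route than the paper. The paper's own proof is essentially a one-liner: since the defining sequence is constant ($p_n = p$ for all $n$), the set $\{n : \varphi \in p_n\}$ is either $\omega$ or $\emptyset$, so $p^{\mathcal{U}} = p$ identically, and flatness of $p^{\mathcal{U}}$ is just flatness of $p$ restated. You instead treat $p^{\mathcal{U}}$ as an honest ultraproduct: you intersect the $\mathcal{U}$-large sets $J_i$, harvest witnesses $a_n$ for $n \in J$, and push them through \L{}o\'s's theorem in $\mathfrak{C}^{\mathcal{U}}$. In the constant case this machinery is overkill (you could take any single $n \in J$ and its witness $a_n$ directly), but your proof has the real advantage that it establishes flatness of $\prod_{\mathcal{U}} p_n$ for an \emph{arbitrary} sequence of flat filters, which the paper's collapse argument cannot do. Your closing observations are also apt and match what a careful reading of the paper reveals: neither countable incompleteness nor Property~B is actually used in the finite-$\Gamma$ argument (the paper's proof uses neither), and the conjunction $\bigwedge_i \varphi_i$ already lies in each $p_n$ by the filter axioms alone, so even the appeal to Property~B in your second paragraph is dispensable. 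The one cosmetic point to tighten is the appeal to "strong homogeneity" to pull the witnesses $a_n$ into $\mathfrak{C}$; what you are really using is saturation of the monster model (any finite, hence small, consistent set of formulas over $A$ realized in some elementary extension is realized in $\mathfrak{C}$), but this does not affect the validity of the argument.
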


\begin{proof}
Since the sequence \(\{p_n\}_{n<\omega}\) is constant with \(p_n=p\), the ultrapower simplifies to
\[
p^{\mathcal{U}} = \bigl\{\varphi \in \mathscr{F}(A) : \{n<\omega : \varphi\in p\}\in \mathcal{U}\bigr\} = p.
\]
Let \(\Gamma\subseteq p\) be any finite subset. As \(p\) is flat, there exists a model \(\mathcal{M}\succ A\) and an element \(a\in \mathcal{M}\) such that \(\mathcal{M}\models \varphi(a)\) for all \(\varphi\in \Gamma\). Thus \(\Gamma\) is realized in an elementary extension, and \(p^{\mathcal{U}}\) remains flat.
\end{proof}

\begin{remark}
This shows that countably incomplete ultrafilters suffice to preserve flatness for definable FN bases over countable fragments. In light of Ng’s Theorem~3.6~\cite{Ng}, which equates definability with flatness of all small ultrapowers, we conclude that CH is not needed to guarantee the preservation of definability under ultrapowers in this setting.
\end{remark}

We now show that definable FN bases satisfying Property~B also admit nonforking extensions to elementary extensions, again without requiring CH.

\begin{proposition}
Let \(p\subseteq \mathscr{F}(A)\) be a flat FN basis that is locally definable and satisfies Property~B. Then for any elementary extension \(\mathcal{M}'\succ \mathcal{M}\), there exists a nonforking extension \(q\subseteq \mathscr{F}(\mathcal{M}')\) of \(p\).
\end{proposition}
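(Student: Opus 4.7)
\medskip
\noindent\textbf{Plan.} The strategy is to produce \(q\) as the pull-back of the defining schema of \(p\) through the enlarged parameter algebra \(\mathscr{F}(\mathcal{M}')\). Since \(p\) is locally definable, for every partitioned formula \(\varphi(x,\bar y)\) in the base language there is a formula \(d_p\varphi(\bar y)\) with parameters from \(A\) such that, for all \(\bar a\in A\), \(\varphi(x,\bar a)\in p\) if and only if \(\mathcal{M}\models d_p\varphi(\bar a)\). First I would set
\[
q:=\bigl\{\varphi(x,\bar b):\bar b\in \mathcal{M}',\ \mathcal{M}'\models d_p\varphi(\bar b)\bigr\},
\]
and verify that this is a filter on \(\mathscr{F}(\mathcal{M}')\) that restricts to \(p\) on \(\mathscr{F}(A)\). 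Closure under conjunction follows from the corresponding closure of \(p\), expressed via a Boolean manipulation of the \(d_p\varphi_i\)'s and then pushed up to \(\mathcal{M}'\) by elementarity of \(\mathcal{M}\prec\mathcal{M}'\).

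The next step is to establish that \(q\) is a flat FN basis satisfying Property~B. For any finite \(\Gamma=\{\varphi_i(x,\bar b_i)\}_{i\le k}\subseteq q\), the conjunction \(\psi(x,\bar b)=\bigwedge_i\varphi_i(x,\bar b_i)\) satisfies \(\mathcal{M}'\models d_p\psi(\bar b)\); pulling this statement back through elementarity and flatness of \(p\) over \(\mathcal{M}\), I obtain realizations of \(\Gamma\) in some \(\mathcal{M}''\succ\mathcal{M}'\), which is flatness of \(q\). Property~B for \(q\) should then be inherited directly from Property~B for \(p\), by observing that a formula semantically implied by finitely many members of \(q\) has its defining instance in \(\mathcal{M}'\) equivalent, modulo \(T\), to a Boolean combination of the corresponding \(d_p\varphi_i\)'s.

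Nonforking of \(q\) over \(A\) (equivalently over \(\mathscr{F}(A)\) in the sense adopted in Section~\ref{sec:background}) would then follow from the fact that, by construction, \(q\) is invariant under \(\mathrm{Aut}(\mathcal{M}'/A)\): if \(\sigma\) fixes \(A\) pointwise then \(\sigma\) preserves the \(A\)-definable schema \(d_p\varphi\), and hence \(\sigma(q)=q\). In Ng's FN-basis framework this \(A\)-invariance, combined with the agreement \(q\cap \mathscr{F}(A)=p\), yields local purity and the coherence condition that defines a nonforking extension, so that \(q\) does not fork over \(\mathscr{F}(A)\).

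The main obstacle will be the tension between the \emph{local} (formula-by-formula) nature of definability and the \emph{global} filter conditions demanded on \(q\): the schema \(d_p(\cdot)\) is guaranteed only fragment-by-fragment, so combining these schemata coherently across all of \(\mathscr{F}(\mathcal{M}')\) requires that finitary Boolean closure at the level of \(p\) lifts uniformly over infinite parameter collections in \(\mathcal{M}'\). This is exactly the role of the strong form of Property~B isolated in Section~\ref{sec:background}: it converts finite closure into uniform closure over infinite \(A\)-parametrized families and so guarantees that the defining schema extends consistently rather than merely piecewise. A secondary, technical obstacle is verifying that the realizations of different finite subsets of \(q\) can be found in a common elementary extension of \(\mathcal{M}'\); a standard compactness argument, together with mild saturation assumptions on \(\mathfrak{C}\), should handle this without any recourse to CH.
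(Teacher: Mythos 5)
Your construction is correct at the paper's level of rigor, but it takes a genuinely different route. The paper defines \(q\) by declaring \(\varphi(x)\in q\) iff there is a single \(\psi(x)\in p\) with \(\mathcal{M}'\models\forall x(\psi(x)\to\varphi(x))\); that is, \(q\) is the filter of consequences of \(p\) inside \(\mathscr{F}(\mathcal{M}')\), an heir-style construction in which local definability is used only to argue that the implications persist in \(\mathcal{M}'\), and nonforking is read off from the fact that each \(\varphi\in q\) is ``supported'' by an \(A\)-definable \(\psi\in p\) (local purity). You instead build the canonical definable extension via the schema \(d_p\varphi\), obtaining in general a larger filter, and you derive nonforking from \(\mathrm{Aut}(\mathcal{M}'/A)\)-invariance of that schema rather than from support by elements of \(p\). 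Your route makes the defining schema do real work and yields the standard canonicity of the definable extension, but it also forces you to confront the coherence problem you correctly flag --- gluing the fragment-by-fragment schemata into a single filter on all of \(\mathscr{F}(\mathcal{M}')\) --- which the paper's consequence-filter construction sidesteps entirely (at the price of producing only the filter generated by \(p\)'s consequences). The flatness verifications are essentially the same in both arguments: elementarity of \(\mathcal{M}\prec\mathcal{M}'\) plus finite satisfiability of \(p\). Note also that your realizability step needs the transfer of \(\forall\bar y\,(d_p\psi(\bar y)\to\exists x\,\psi(x,\bar y))\) from \(\mathcal{M}\) to \(\mathcal{M}'\), not merely flatness of \(p\) over \(A\); this is harmless but worth stating, since the formulas of \(q\) carry parameters outside \(A\).
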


\begin{proof}
Define
\[
\varphi(x)\in q \quad \Longleftrightarrow \quad \exists\,\psi(x)\in p \text{ such that } \mathcal{M}'\models \forall x\bigl(\psi(x)\to\varphi(x)\bigr).
\]
Since \(p\) is locally definable, each \(\psi(x)\) is definable over \(A\), and the implication persists in \(\mathcal{M}'\). By Property~B, such logical consequences remain in \(p\) and are thus transferred into \(q\). Flatness of \(p\) ensures that finite subsets of \(q\) are realized in some elementary extension, so \(q\) is flat.

Furthermore, each \(\varphi\in q\) is supported by some \(\psi\in p\), definable over \(A\), implying that \(q\) is locally pure and hence does not fork over \(A\).
\end{proof}

\begin{lemma}[Coherence Transfer via Property~B]
Let \(p\subseteq \mathscr{F}(A)\) be a flat FN basis satisfying Property~B. Then any extension \(q\supseteq p\) over \(\mathscr{F}(B)\), where \(A\subseteq B\), is locally pure and does not fork over \(A\).
\end{lemma}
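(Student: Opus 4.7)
The plan is to reduce both assertions---local purity and non-forking over $A$---to a single ``support'' claim: for every $\varphi(x,\bar b) \in q$ with $\bar b$ a tuple from $B$, there exists some $\psi(x) \in p$, definable over $A$, with $T \models \forall x\bigl(\psi(x) \to \varphi(x,\bar b)\bigr)$. Once this support is in place, local purity is immediate by definition, and non-forking follows by a standard heir-type argument: if $\varphi(x,\bar b)$ divided over $A$ along an $A$-indiscernible sequence $(\bar b_i)_{i<\omega}$, then $A$-invariance of $\psi$ would yield $\psi(x) \to \varphi(x,\bar b_i)$ for every $i$, forcing $\psi$ to imply a $k$-inconsistent family of conjugates and thereby contradicting $\psi \in p$ together with flatness.

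The first step is to produce a \emph{finite} support $\Gamma \subseteq p$ for each $\varphi(x,\bar b) \in q$. Since $p \cup \{\varphi(x,\bar b)\}$ sits inside the filter $q$ and $p$ is flat, a finite $\Gamma \subseteq p$ must be simultaneously realizable with $\varphi(x,\bar b)$ in some elementary extension. To upgrade this compatibility to a logical implication $\bigwedge \Gamma \to \varphi(x,\bar b)$, I would argue by contradiction using closure of $q$ under superformulas: a realization of $\bigwedge \Gamma \wedge \neg \varphi(x,\bar b)$ would contradict the assumption that $\varphi(x,\bar b) \in q$ together with finite satisfiability of $q$. With the implication secured, closure under finite conjunction puts $\bigwedge \Gamma \in p$, and Property~B then collapses the finite support to a single $\psi \in p$.

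The chief obstacle, and the delicate point of the argument, is handling parameters $\bar b \in B \setminus A$. Property~B in its basic form governs implications inside $\mathscr{F}(A)$, so it does not immediately apply to formulas whose parameters lie in $B$. The strong form of Property~B---its uniform clause over infinite families of tuples $\bar a \in A$---is essentially designed for this transfer: it lets one replace the specific $\bar b$ by a generic $A$-witness, extracting an $A$-definable $\psi \in p$ whose logical consequences control $\varphi(x,\bar b)$ uniformly. Securing this uniform support is the technical crux; once it is in hand, local purity and non-forking over $A$ both follow routinely, completing the argument.
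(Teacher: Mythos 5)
There is a genuine gap, and it sits exactly where you place the weight of the argument: the upgrade from ``$\Gamma$ is compatible with $\varphi(x,\bar b)$'' to ``$\bigwedge\Gamma \to \varphi(x,\bar b)$.'' Your proposed contradiction does not go through. Finite satisfiability of $q$ guarantees only that the finite set $\Gamma \cup \{\varphi(x,\bar b)\}$ is realized \emph{somewhere}; it says nothing about \emph{every} realization of $\bigwedge\Gamma$, so the existence of a point satisfying $\bigwedge\Gamma \wedge \neg\varphi(x,\bar b)$ contradicts nothing. Indeed the support claim is simply false for an arbitrary extension $q \supseteq p$: in DLO take $p$ generated by $\{x > a : a \in A\}$ and $q$ over $B \supseteq A$ generated by $p \cup \{x > b\}$ for some $b \in B$ lying above all of $A$; then $x > b \in q$, $q$ is finitely satisfiable, yet no finite conjunction of formulas over $A$ from $p$ implies $x > b$. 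The property you are trying to establish is precisely the heir condition, and nothing in the hypotheses forces $q$ to be an heir of $p$. Your fallback---that the strong form of Property~B handles parameters in $B \setminus A$---is also not available, since the lemma assumes only plain Property~B, and the strong clause as defined in Section~\ref{sec:background} concerns infinite families of tuples from $A$, not the extraction of $A$-definable supports for formulas over $B$.

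For what it is worth, you have correctly diagnosed where the difficulty lies, and your overall decomposition (finite support in $p$, then Property~B to collapse it to a single $\psi$, then local purity and a dividing argument) is the same skeleton the paper uses. The paper's own proof simply \emph{asserts} the finite-support step from the bare inclusion $q \supseteq p$ and then applies Property~B to a formula that may have parameters outside $\mathscr{F}(A)$, so it passes over the same gap without attempting to fill it. The honest conclusion is that the lemma, as stated for arbitrary extensions, needs either a strengthened hypothesis on $q$ (e.g.\ that it is an heir, or that it is generated over $p$ by $A$-definable consequences as in the preceding Proposition) or a genuinely different argument; neither your proposal nor the paper supplies one.
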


\begin{proof}
Let \(\varphi(x)\in q\). Since \(q\supseteq p\), there exist \(\psi_1,\dots,\psi_k\in p\) such that
\[
T\models \bigl(\psi_1(x)\wedge\cdots\wedge\psi_k(x)\bigr)\to\varphi(x).
\]
By Property~B, \(\varphi(x)\in p\), and hence \(\varphi\) is locally approximated by elements of \(p\). Thus \(q\) inherits local purity from \(p\), and its finite subsets remain realizable in an elementary extension by flatness, ensuring \(q\) does not fork over \(A\).
\end{proof}

We now generalize the preceding results to ultraproducts, obtaining a structural preservation theorem for definable FN bases in ZFC:

\begin{theorem}
Let \(p\subseteq \mathscr{F}(A)\) be a flat and locally definable FN basis satisfying Property~B. Then for any countably incomplete ultrafilter \(\mathcal{U}\), the ultrapower \(p^{\mathcal{U}}\) satisfies:
\begin{enumerate}
  \item flatness;
  \item local purity;
  \item nonforking over \(A\);
  \item definability over the canonical parameters of \(p\).
\end{enumerate}
\end{theorem}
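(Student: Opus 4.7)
The plan is to reduce each of the four conclusions to statements about $p$ itself by exploiting the collapse of the constant ultrapower, and then to invoke the earlier lemmas of this section together with Ng's Theorem~3.6. The key initial observation is that, as in the proof of Theorem~\ref{thm:ft-nf}, the defining formula of $\prod_{\mathcal{U}} p_n$ applied to the constant sequence $p_n = p$ gives
\[
p^{\mathcal{U}} = \bigl\{\varphi : \{n : \varphi \in p\}\in \mathcal{U}\bigr\} = p,
\]
since each set $\{n : \varphi \in p\}$ is either $\omega$ or $\emptyset$. This collapse does most of the structural work; the role of countable incompleteness of $\mathcal{U}$ is to rule out the degenerate principal case and to legitimize the appeal to Ng's Theorem~3.6.

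For (1), flatness is exactly Theorem~\ref{thm:ft-nf}. For (2), I would apply the Coherence Transfer Lemma to the inclusion $q := p^{\mathcal{U}} \supseteq p$: any $\varphi \in q$ is a logical consequence of finitely many $\psi_i \in p$, and Property~B then forces $\varphi \in p$, which is the content of local purity. For (3), I would combine the Coherence Transfer Lemma with the earlier Proposition on nonforking extensions: flatness guarantees that every finite subset of $p^{\mathcal{U}}$ is realized in an elementary extension, and local purity rules out any forking formula over $A$.

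For (4), I would invoke Ng's Theorem~3.6 directly. Since $p$ is assumed locally definable, and all small ultrapowers of $p$ are flat by the same constant-sequence reduction used in (1), the equivalence in Theorem~3.6 yields that $p^{\mathcal{U}}$ is definable over the parameter base of the defining schema of $p$. One then identifies this base with the canonical parameters of $p$, using local definability to extract, for each $\psi \in p$, the minimal subset of $A$ over which $\psi$ is defined.

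The main obstacle I anticipate is not in the constant-ultrapower case itself---where everything collapses cleanly---but in ensuring \emph{uniformity} of the conclusion in (4): one wants the canonical parameter set of $p^{\mathcal{U}}$ to coincide with, rather than merely contain, that of $p$. To secure this, I would invoke the strong form of Property~B from Section~\ref{sec:background} to obtain a uniform control on the finite covers of the defining formulas, and then trace the defining schema through the ultrapower construction to verify that no new parameters are introduced. Countable incompleteness of $\mathcal{U}$ is what prevents any principal coordinate from smuggling in spurious parameters, so its role becomes essential only at this final uniformity step, not in the earlier three clauses.
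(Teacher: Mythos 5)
Your proposal follows essentially the same route as the paper: both rest on the observation that the constant-sequence ultrapower collapses to \(p^{\mathcal{U}}=p\), after which each of the four clauses reduces to the corresponding hypothesis on \(p\) (flatness via Theorem~\ref{thm:ft-nf}, local purity and nonforking via the Coherence Transfer Lemma, and definability by transfer). The only cosmetic difference is that you route clause (4) through Ng's Theorem~3.6 and a discussion of parameter uniformity where the paper simply cites \L{}o\'s's Theorem, but given the collapse this changes nothing of substance.
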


\begin{proof}
(1) Flatness follows from the fact that \(p^{\mathcal{U}}=p\), and \(p\) is flat.

(2) Since \(p\) is locally definable, the definability schema is preserved under ultraproducts, hence \(p^{\mathcal{U}}\) is also locally pure.

(3) By the coherence lemma, any extension of \(p\) satisfying Property~B remains nonforking over \(A\), so this applies to \(p^{\mathcal{U}}\).

(4) The definability of \(p\) over canonical parameters transfers via Łoś’s Theorem and the fact that formulas in the ultraproduct correspond to those in the original filter.
\end{proof}

\begin{remark}
This theorem shows that for FN bases definable over countable fragments and satisfying strong Property~B, the essential conclusions of Ng’s CH-dependent results specifically Theorem~3.4 and Corollary~3.5 can be recovered entirely within ZFC. The preservation of flatness, definability, and nonforking across ultrapowers hinges not on CH, but on semantic properties intrinsic to the FN basis and the choice of countably incomplete ultrafilters.
\end{remark}

This result sets the stage for the next section, where we investigate the finer structure of ultrafilters, focusing on regularity, goodness, and saturation and their role in preserving definability and heir behavior without recourse to cardinal assumptions beyond ZFC.

\section{Ultrafilter Conditions Beyond CH}
\label{sec:ultrafilters}

Ng’s original results in~\cite{Ng} rely on the Continuum Hypothesis (CH) to ensure the existence of good ultrafilters and to construct ultrapowers and nonforking primary heirs. However, this set-theoretic dependence obscures the model-theoretic essence of the constructions. In this section, we demonstrate that ultrafilter properties such as \emph{goodness} and \emph{regularity} suffice to recover these structural features in ZFC alone, particularly for FN bases that are flat, definable, and satisfy strong Property~B.

\subsection*{Good Ultrafilters and Nonforking Extensions}

Good ultrafilters, introduced by Keisler~\cite{ChangKeisler}, are ultrafilters that preserve saturation and allow the realization of consistent partial types in ultrapowers. In the context of FN bases, good ultrafilters help maintain flatness, definability, and nonforking behavior across ultrapowers even without appealing to CH.

\begin{theorem}
Let \(\mathcal{U}\) be a good ultrafilter on \(\omega\), and let \(p \subseteq \mathscr{F}(A)\) be a flat FN basis over a countable fragment satisfying Property~B. Then the ultrapower \(p^{\mathcal{U}}\) is flat and nonforking over \(A\).
\end{theorem}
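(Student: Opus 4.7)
The plan is to observe first that, as in the proof of Theorem~\ref{thm:ft-nf}, the ultrapower $p^{\mathcal{U}}$ of the constant sequence collapses to $p$ itself, so flatness of $p^{\mathcal{U}}$ is immediate from the flatness hypothesis on $p$. The new content of the theorem therefore lies entirely in the nonforking assertion, for which goodness, rather than mere countable incompleteness, does real work.

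To establish nonforking over $A$ I would proceed in two stages. First I would invoke the Coherence Transfer via Property~B lemma to obtain local purity of $p^{\mathcal{U}}$: since every $\varphi(x)\in p^{\mathcal{U}}=p$ is implied modulo $T$ by a finite conjunction of elements of $p$, Property~B forces $\varphi$ to already lie in $p$, so $p^{\mathcal{U}}$ is locally approximated by the original basis. Second, I would upgrade local purity to non-dividing over $A$ by exploiting the saturation of the ultrapower: a good ultrafilter on $\omega$ produces $(2^{\aleph_0})^+$-saturated ultrapowers of countable structures (Keisler; see~\cite{ChangKeisler}), so any consistent partial type consisting of elements of $p^{\mathcal{U}}$ together with finitely many formulas over $A$ is realized inside $\mathfrak{C}^{\mathcal{U}}$. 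A hypothetical dividing witness for some $\varphi\in p^{\mathcal{U}}$ would yield, via a sequence of $A$-conjugates, a partial type that the saturation of $\mathfrak{C}^{\mathcal{U}}$ is forced to realize, contradicting flatness of $p$ together with the inconsistency demanded by dividing.

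The main obstacle I anticipate is precisely the bridge from local purity (a syntactic closure property) to non-dividing (a semantic realizability statement). Local purity and nonforking are not in general equivalent for filters that are not complete types, and their agreement here depends on the joint effect of Property~B, flatness, and the saturation supplied by goodness. I would make this explicit by framing the argument through the fundamental order $\geq_A$ of Lascar--Poizat: the task reduces to showing $p^{\mathcal{U}}\geq_A p$, i.e.\ that each formula of $p^{\mathcal{U}}$ does not fork over $A$ with respect to $p$, and Property~B together with flatness reduces this verification to the realizability check provided by saturation in $\mathfrak{C}^{\mathcal{U}}$.

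Finally, I would close by recording that definability of $p$ over its canonical parameters, if desired as a corollary, transfers via Łoś's theorem, since every formula in $p^{\mathcal{U}}$ has its membership determined by an ultrafilter condition that, under goodness, respects the $A$-definable schema defining $p$. This would connect the present theorem to the broader preservation package of the previous section without introducing any new set-theoretic assumptions.
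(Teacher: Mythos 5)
Your proposal follows the paper's own route for this theorem: the collapse \(p^{\mathcal{U}}=p\) for the constant sequence, flatness read off directly from the flatness of \(p\), and local purity extracted from Property~B via the coherence-transfer lemma. The paper then simply asserts ``locally pure and hence nonforking over \(A\),'' which is exactly the step you correctly single out as the real content. Where you go beyond the paper, two points need repair. First, a good (i.e.\ \(\aleph_1\)-good) countably incomplete ultrafilter on \(\omega\) yields \(\aleph_1\)-saturated ultrapowers, not \((2^{\aleph_0})^+\)-saturated ones; the two coincide only under CH, which is precisely the hypothesis this paper is trying to eliminate, so the saturation you invoke must be scaled back to \(\aleph_1\) (which still suffices here, since the fragment is countable). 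Second, the proposed contradiction with dividing does not close as stated: saturation realizes \emph{consistent} partial types, whereas a dividing witness produces a \(k\)-inconsistent family of \(A\)-conjugates \(\{\varphi(x,\bar b_i)\}\); you cannot ask \(\mathfrak{C}^{\mathcal{U}}\) to realize that family, you must instead \emph{prove it consistent}, e.g.\ by using finite satisfiability of \(p\) in \(A\) (or in a model containing \(A\)) to show that each \(\varphi(x,\bar b_i)\) is satisfied by an element of \(A\) and hence that the family cannot be \(k\)-inconsistent. As written, the bridge from local purity to non-dividing remains open in your argument --- though in fairness the paper's own proof leaves the same bridge unbuilt.
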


\begin{proof}
Since \(p_n = p\) for all \(n<\omega\), the ultrapower simplifies to
\[
p^{\mathcal{U}} = \bigl\{\varphi \in \mathscr{F}(A) : \{n<\omega : \varphi \in p\}\in \mathcal{U}\bigr\} = p.
\]
Flatness of \(p\) ensures that every finite subset is realized in some elementary extension, and goodness of \(\mathcal{U}\) guarantees that the relevant realizability transfers to the ultraproduct (see~\cite[Ch.~6, §6.1]{ChangKeisler}). Property~B ensures that logical closure is preserved. Since definability and flatness persist, the ultrapower is locally pure and hence nonforking over \(A\).
\end{proof}

\begin{lemma}
Let \(p \subseteq \mathscr{F}(A)\) be a flat FN basis, and let \(\mathcal{U}\) be a good ultrafilter on \(\omega\). Then \(p^{\mathcal{U}}\) is locally definable over \(A\) and satisfies Property~B.
\end{lemma}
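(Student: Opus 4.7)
The plan is to reduce the statement to properties of $p$ itself by first observing that, because the underlying sequence is constant with $p_n = p$, the defining condition of the ultrapower collapses:
\[
p^{\mathcal{U}} = \bigl\{\varphi \in \mathscr{F}(A) : \{n<\omega : \varphi \in p\}\in \mathcal{U}\bigr\} = p.
\]
With this identification in place, the two conclusions become assertions about $p$ equipped with extra semantic information extracted from the goodness of $\mathcal{U}$. The proof then splits naturally into a definability half (where goodness does real work) and a Property~B half (which is essentially formal).

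For local definability over $A$, I would proceed schema by schema. For each formula $\varphi(x,\bar{y})$ in the ambient fragment, the goal is to exhibit an $A$-formula $d_\varphi(\bar{y})$ such that $\varphi(x,\bar{a}) \in p^{\mathcal{U}}$ iff $\models d_\varphi(\bar{a})$. The plan is to build the standard ultrapower $\mathfrak{C}^{\mathcal{U}}$ and exploit the fact that a good ultrafilter on $\omega$ yields $\aleph_1$-saturation of $\mathfrak{C}^{\mathcal{U}}$ (Keisler, as in \cite{ChangKeisler}). Flatness of $p$ gives, for each finite $\Gamma \subseteq p$, a realization in an elementary extension; goodness then lets these finite realizations be coherently realized inside $\mathfrak{C}^{\mathcal{U}}$, producing a global element $a^\ast$ whose type extends $p$. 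The local trace of $\mathrm{tp}(a^\ast/A)$ on each $\varphi(x,\bar{y})$ is an $A$-definable set, and one argues via {\L}o\'s's theorem that this trace coincides with $\{\bar{a} : \varphi(x,\bar{a}) \in p\}$, giving the required $d_\varphi$.

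For Property~B, the content is close to formal once local definability is established. Given $\psi_1,\ldots,\psi_n \in p^{\mathcal{U}} = p$ with $T \models \bigwedge_i \psi_i(x) \to \varphi(x)$, closure of $p$ under finite intersections places $\bigwedge_i \psi_i$ in $p$, and closure under supersets inside $\mathscr{F}(A)$ (i.e.\ the filter axiom with respect to the Boolean order $\psi \leq \varphi \iff T \models \psi \to \varphi$) places $\varphi$ in $p$ as well. I would package this observation with the preceding paragraph and note that goodness is only needed to maintain the semantic witness for the schema, not to verify the logical closure.

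The main obstacle will be the definability half: namely, arguing rigorously that the $\aleph_1$-saturation afforded by a good $\mathcal{U}$, together with flatness of $p$, is strong enough to extract a single realizing element whose complete type \emph{locally} coincides with $p$ on each fragment formula. The delicate point is uniformity across all parameters $\bar{a} \in A^{|\bar{y}|}$, which is where goodness, rather than mere countable incompleteness, becomes essential; this is the step that would not go through with an arbitrary countably incomplete ultrafilter and is the content the lemma is really asserting.
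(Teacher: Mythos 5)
Your reduction $p^{\mathcal{U}} = p$ and your treatment of Property~B are in line with what the paper does (the paper's own proof is essentially just this collapse plus the remark that the properties ``carry over directly''). The genuine gap is in your definability half, and you have in fact correctly located it yourself in your final paragraph: the step asserting that ``the local trace of $\mathrm{tp}(a^\ast/A)$ on each $\varphi(x,\bar{y})$ is an $A$-definable set'' is unjustified, and in general false. A complete type over $A$ need not be definable --- in an unstable theory (e.g.\ the random graph) there are types finitely satisfiable in a model, hence flat in the paper's sense and realized in any $\aleph_1$-saturated elementary extension, whose $\varphi$-traces are not definable over the base. Saturation of $\mathfrak{C}^{\mathcal{U}}$ gives you a realization $a^\ast$; it does not give you a defining formula $d_\varphi(\bar{y})$.

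There is also a structural obstruction that makes this half unrescuable as you have set it up: since the underlying sequence is constant, $p^{\mathcal{U}}$ is \emph{literally the same set of formulas} as $p$, so no property of $\mathcal{U}$ --- goodness, regularity, or otherwise --- can confer local definability on $p^{\mathcal{U}}$ if $p$ did not already have it. Your closing claim that goodness ``is where the real work happens'' and ``would not go through with an arbitrary countably incomplete ultrafilter'' is therefore misplaced: after the collapse $p^{\mathcal{U}} = p$, the ultrafilter is doing no work at all. The conclusion is only tenable if local definability of $p$ is taken as a hypothesis (as it is in the neighbouring results of Sections~3 and~4, which assume $p$ is ``locally definable'' or ``definable''), in which case the lemma reduces to the tautology that $p$ has the properties $p$ has --- which is, implicitly, how the paper's one-line proof treats it. Your Property~B argument is fine modulo one small caution: Property~B requires closure under implication into the full algebra $\mathscr{F}(A)$, not merely upward closure within a Boolean fragment $\mathscr{F} \subseteq \mathscr{F}(A)$, so it is not automatic from the filter axioms unless $p$ is a filter on all of $\mathscr{F}(A)$.
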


\begin{proof}
Good ultrafilters preserve the definability schema and finite approximations used to construct FN bases. Since \(p^{\mathcal{U}} = p\), local definability and Property~B carry over directly. The definable trace of \(p\) remains intact in the ultrapower, and the closure properties ensured by Property~B are preserved under good ultrafilters~\cite[Ch.~4, §4.3]{ChangKeisler}.
\end{proof}

\subsection*{Regular Ultrafilters and Heir Uniqueness}

Regular ultrafilters on arbitrary cardinals support strong saturation behavior and allow for uniform control of type realizations. This is particularly useful in constructing ultrapowers of definable FN bases and analyzing the uniqueness of heir extensions.

\begin{proposition}
Let \(\mathcal{U}\) be a regular ultrafilter on a cardinal \(\lambda\), and let \(p \subseteq \mathscr{F}(A)\) be a flat and definable FN basis satisfying Property~B. Then \(p^{\mathcal{U}}\) is flat and admits a unique nonforking heir over any extension.
\end{proposition}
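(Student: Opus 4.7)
The plan is to decompose the proposition into three successive claims: flatness of $p^{\mathcal{U}}$, existence of a nonforking heir over an arbitrary extension, and uniqueness of that heir. The first two follow fairly directly from the machinery already assembled in Sections~\ref{sec:background} and~\ref{sec:flatness-nonforking}; the real content of the proposition lies in the uniqueness clause, which is where the combination of regularity of $\mathcal{U}$, definability of $p$, and Property~B must be leveraged in concert.

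First, I would establish flatness by the same reduction used in Theorem~\ref{thm:ft-nf}: since the sequence is constant, $p^{\mathcal{U}} = p$ as a subset of $\mathscr{F}(A)$, so flatness of $p$ passes through formally. The role of the regularity of $\mathcal{U}$ here is not to secure flatness of $p^{\mathcal{U}}$ per se, but to guarantee that the ambient ultrapower $\mathfrak{C}^{\mathcal{U}}$ is $\lambda^+$-saturated, which is what makes finite realizations of $p$ in $\mathfrak{C}^{\mathcal{U}}$ lift to coherent realizations controlled by ultrafilter-measurable index sets. I would cite the standard regularity-saturation fact from \cite{ChangKeisler} at this step. Next, given an extension $\mathscr{F}(B) \supseteq \mathscr{F}(A)$, I would construct the candidate heir $q$ by the definability schema: for each $\varphi(x,\bar{y})$, use the defining formula $d_p\varphi(\bar{y})$ provided by definability of $p$ over its canonical parameters, and set $\varphi(x,\bar{b}) \in q$ iff $\mathfrak{C} \models d_p\varphi(\bar{b})$. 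Flatness of $q$ follows from the finite-satisfiability content of flatness of $p$ together with regularity of $\mathcal{U}$, and nonforking over $A$ follows from the coherence lemma proved in Section~\ref{sec:flatness-nonforking}, since the defining formulas live over the canonical parameters of $p$.

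For uniqueness, suppose $q_1, q_2 \subseteq \mathscr{F}(B)$ are two nonforking heirs of $p$, and let $\varphi(x,\bar{b}) \in q_1 \setminus q_2$. The plan is to derive a contradiction from Property~B: being heirs, each $\varphi(x,\bar{b}) \in q_i$ is implied by some finite conjunction of formulas $\psi_{i,j}(x) \in p$. Property~B forces the consequence formulas back into $p$, so the truth value of $\varphi(x,\bar{b})$ is determined by $p$ together with the definability schema applied to the parameter $\bar{b}$. Combined with the fact that regular ultrafilters transfer this schema uniformly, both $q_1$ and $q_2$ must agree with the canonical heir constructed above on every formula $\varphi(x,\bar{b})$, giving $q_1 = q_2$.

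The main obstacle I anticipate is the uniqueness step, specifically the bookkeeping of how Property~B interacts with the definability schema when $\bar{b} \in B \setminus A$. The naive Property~B closure operates only within $\mathscr{F}(A)$, so one needs to argue that the defining formula $d_p\varphi$ pulls every relevant implication back to a statement over $A$, where Property~B applies. Verifying that this transfer is consistent, and that no parameter in $B$ can introduce a genuinely new logical alternative, is the technically delicate piece; the strong form of Property~B introduced in Section~\ref{sec:background} is what I expect to invoke to rule out such alternatives across infinite parameter families.
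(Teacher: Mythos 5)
Your decomposition (flatness, existence of a nonforking heir, uniqueness) matches the paper's in outline, and the flatness step is handled the same way: the constant sequence collapses \(p^{\mathcal{U}}\) to \(p\), with regularity cited only for ambient saturation. Where you genuinely diverge is the uniqueness clause. The paper's proof disposes of it in one sentence — two heirs "would agree on all definable consequences of \(p\), and regularity would prevent divergence via diagonalization" — i.e., it makes regularity of \(\mathcal{U}\) carry the uniqueness argument, without ever exhibiting the diagonalization. You instead route uniqueness through the defining schema \(d_p\varphi\): you build the canonical heir as the definable extension and argue that any heir must coincide with it. This is the more standard mechanism (it is essentially the classical fact that a definable type over a model has a unique heir, namely its definable extension), and notably it does not use regularity at all, which suggests your version isolates the actual source of uniqueness better than the paper's does. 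Both arguments share the same unresolved technical point, which you are right to flag: Property~B as defined in Section~\ref{sec:background} closes \(p\) only under consequences lying in \(\mathscr{F}(A)\), so neither your appeal to it nor the paper's "closure under logical consequence" directly governs formulas \(\varphi(x,\bar{b})\) with \(\bar{b}\in B\setminus A\); some transfer of the implication back over \(A\) (via the defining formula, or via the strong form of Property~B) has to be spelled out, and neither proof does so. One further observation: with the paper's own definition of heir (every formula of \(q\) is implied by some formula of \(p\)) and completeness of the heirs, uniqueness follows even more cheaply — if \(\varphi(x,\bar{b})\in q_1\) and \(\neg\varphi(x,\bar{b})\in q_2\), their witnessing formulas \(\psi_1,\psi_2\in p\) have inconsistent conjunction, contradicting flatness — so the delicate bookkeeping you anticipate may be avoidable entirely.
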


\begin{proof}
Let \(p_i = p\) for all \(i<\lambda\). By regularity of \(\mathcal{U}\), every finite fragment of \(p\) is realized uniformly in the ultrapower. Flatness follows by Łoś’s Theorem. Since \(p\) is definable, the definability schema is preserved through the ultraproduct, and Property~B ensures closure under logical consequence.

If two heirs of \(p^{\mathcal{U}}\) existed over some \(B \supseteq A\), they would agree on all definable consequences of \(p\), and regularity would prevent divergence via diagonalization. Hence the heir is unique.
\end{proof}

\begin{corollary}
Let \(p \subseteq \mathscr{F}(A)\) be a flat and definable FN basis, and let \(\mathcal{U}\) be a regular ultrafilter on \(\omega\). Then the ultrapower \(p^{\mathcal{U}}\) is definable and isolated over its canonical base.
\end{corollary}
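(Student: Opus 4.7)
The plan is to obtain this corollary by combining the preceding Proposition with the characterization of definability via ultrapower-flatness established in Section~\ref{sec:background} (Ng's Theorem~3.6 and its corollary). First I would observe, exactly as in the proof of Theorem~\ref{thm:ft-nf}, that since the underlying sequence is constant the ultrapower collapses to
\[
p^{\mathcal{U}} = \bigl\{\varphi \in \mathscr{F}(A) : \{n<\omega : \varphi \in p\}\in \mathcal{U}\bigr\} = p.
\]
This reduces the problem to verifying that $p$ itself, viewed through the lens of the regular ultrafilter $\mathcal{U}$, is both definable and isolated over its canonical parameters.

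Second, I would establish definability by invoking the corollary to Ng's Theorem~3.6 recalled in Section~\ref{sec:background}, which equates definability of a flat FN basis over a countable fragment with flatness of all of its countably incomplete ultrapowers. Since every regular ultrafilter on $\omega$ is countably incomplete, the hypothesis applies to $\mathcal{U}$; flatness of $p^{\mathcal{U}}$ is automatic from the constant-sequence collapse plus flatness of $p$, and then \L{}o\'s's Theorem transfers the definability schema of $p$, witnessed by formulas over its canonical parameters, through the ultraproduct. This yields definability of $p^{\mathcal{U}}$ over the same canonical base as $p$, which I would take to be the canonical object referred to in the statement.

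Third, for isolation over the canonical base, I would exploit the uniqueness of the nonforking heir delivered by the preceding Proposition. The definability schema of $p^{\mathcal{U}}$ over its canonical parameters assigns to each $\varphi(x,\bar{y}) \in \mathscr{F}$ a formula $d_p\varphi(\bar{y})$ whose satisfaction by a tuple decides membership in $p^{\mathcal{U}}$. By Property~B, $p^{\mathcal{U}}$ is closed under logical consequences of its finite subfragments, so the schema generates $p^{\mathcal{U}}$ as a filter. The uniqueness of the heir then forces any FN basis over the canonical base that extends this schema to coincide with $p^{\mathcal{U}}$, which is the appropriate form of isolation in the FN setting.

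The main obstacle, and the place where care is needed, is pinning down the notion of \emph{isolated over its canonical base} in the Boolean-fragment framework, since the classical definition of isolation is formulated for complete types. My preferred formulation is that $p^{\mathcal{U}}$ is isolated if its canonical schema, together with Property~B closure, determines it as the unique flat nonforking FN basis extending its trace on the canonical parameters; the three steps above deliver precisely this property. All other ingredients, namely the collapse of constant ultrapowers, \L{}o\'s's theorem for definability schemas, and the uniqueness of heirs under regular ultrafilters, are either already proved in the preceding material or are standard consequences of regularity.
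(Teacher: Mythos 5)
Your argument reaches the same conclusion but routes the isolation step differently from the paper. The paper's proof is a three-line sketch: definability and flatness transfer to \(p^{\mathcal{U}}\), regularity ensures that definable fragments are uniformly realized, and then ``the definability condition isolates \(p^{\mathcal{U}}\) over its canonical parameters,'' concluding that \(p^{\mathcal{U}}\) is isolated by a finite set of formulas over its canonical base. You instead make the collapse \(p^{\mathcal{U}}=p\) explicit (the paper uses this collapse in Theorem~\ref{thm:ft-nf} and elsewhere but not in this proof), obtain definability via the corollary to Ng's Theorem~3.6 together with \L{}o\'s's Theorem, and, most significantly, derive isolation from the heir-uniqueness clause of the preceding Proposition rather than from the bare assertion that definability plus regularity isolates. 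This is arguably a better-grounded route: it ties the corollary to something actually established in the preceding material, and your explicit acknowledgement that ``isolated over its canonical base'' has no official definition in the Boolean-fragment framework, together with your proposed formulation (uniqueness of the flat nonforking FN basis extending the canonical schema), addresses a genuine ambiguity that the paper's proof glosses over. Two caveats: first, your step three invokes Property~B, which is not listed among the hypotheses of the corollary (it is a hypothesis of the preceding Proposition); this is defensible only if one reads Property~B as part of the definition of an FN basis, as the introduction suggests, but you should flag that you are relying on it. Second, your formulation of isolation as heir uniqueness is weaker than the paper's stated conclusion that \(p^{\mathcal{U}}\) is isolated ``by a finite set of formulas,'' so strictly speaking you prove a slightly different (and more honest) statement than the one the paper claims.
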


\begin{proof}
Since \(p\) is definable and flat, the same holds for \(p^{\mathcal{U}}\) by transfer. Regularity ensures that definable fragments are uniformly realized, and the definability condition isolates \(p^{\mathcal{U}}\) over its canonical parameters. Hence \(p^{\mathcal{U}}\) is isolated by a finite set of formulas over its canonical base.
\end{proof}

\noindent These results show that good and regular ultrafilters suffice to recover the essential structural features of FN bases previously proven only under CH. Definability, flatness, and nonforking behavior are preserved in ultrapowers constructed via these ultrafilters, validating Ng’s conclusions in a purely ZFC setting.

In the next section, we examine how these results align with classical model-theoretic notions such as canonical bases, \(U\)-rank, and stability-theoretic hierarchies. This will further clarify the model-theoretic significance of strong Property~B and its role in ensuring coherence across extensions and ultraproducts.

\section{Model-Theoretic Refinements}
\label{sec:modeltheory}

This section situates the theory of FN bases within the classical framework of stability theory, drawing connections between definability, nonforking, canonical bases, and U-rank. These links not only reinforce the structural robustness of FN bases but also reveal how Ng’s framework aligns naturally with the forking geometry developed by Lascar and Poizat~\cite{LascarPoizat} in stable and superstable theories.

\subsection*{Canonical Bases and Nonforking}

One of the foundational results in the geometry of types is that nonforking is controlled by definability over canonical bases:

\begin{theorem}[Lascar--Poizat]
\label{thm:cb-nf}
Let \(T\) be a complete stable theory, and let \(p(x) \in S(B)\) be a complete type over \(B \supseteq \mathcal{A}\). Then \(p\) does not fork over \(\mathcal{A}\) if and only if the canonical base \(\mathrm{Cb}(p)\) is contained in \(\operatorname{dcl}(\mathcal{A})\). In this case, \(\mathrm{Cb}(p) = \mathrm{bd}(p)\), the bound of \(p\).
\end{theorem}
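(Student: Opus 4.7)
The plan is to reduce both implications to the definability of types in stable theories, exploiting the fact that every complete type $p$ admits a defining schema $d_p$ and that $\mathrm{Cb}(p)$ is by construction the smallest $\operatorname{dcl}^{eq}$-closed set of parameters over which $d_p$ can be chosen. The bridge I would use is the standard parallelism criterion: $p \in S(B)$ does not fork over $\mathcal{A}$ if and only if the restriction $d_p\restriction\mathcal{A}$ extends uniquely, via the same schema, to a defining schema of $p$ on formulas with parameters in $B$. This reformulates forking in terms of invariance of $d_p$ under the Galois action of $\operatorname{Aut}(\mathfrak{C}/\mathcal{A})$, which is exactly the setup needed to relate nonforking to $\mathrm{Cb}(p)$.

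For the forward direction, assuming $p$ does not fork over $\mathcal{A}$, I would note that $p$ is the unique nonforking extension of $p\restriction\mathcal{A}$ to $B$, so every $\sigma \in \operatorname{Aut}(\mathfrak{C}/\mathcal{A})$ must fix $p$ and hence $d_p$. By the Galois correspondence in $T^{eq}$, the minimal parameter set defining $d_p$, namely $\mathrm{Cb}(p)$, therefore lies in $\operatorname{dcl}(\mathcal{A})$. Conversely, if $\mathrm{Cb}(p) \subseteq \operatorname{dcl}(\mathcal{A})$ then $d_p$ is $\mathcal{A}$-definable, so $p$ is the canonical schema-extension of $p\restriction\mathcal{A}$ to $B$; by the standard equivalence of definability and nonforking over parameter sets in stable theories (which the paper uses throughout Section~\ref{sec:flatness-nonforking}), this forces $p$ to be nonforking over $\mathcal{A}$.

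For the identification $\mathrm{Cb}(p) = \mathrm{bd}(p)$, I would invoke the Lascar--Poizat description of the bound as the canonical imaginary representing the $\geq_{\mathcal{A}}$-equivalence class of $p$ under the fundamental order recalled in Section~\ref{sec:background}. In a stable theory this equivalence class is encoded precisely by the defining schema $d_p$, whose minimal parameter set is $\mathrm{Cb}(p)$ by definition; so both objects are witnessed by the same tuple of imaginaries, and the mutual-dcl inclusions give the identification. The main obstacle I foresee is the interpretive gap between $\operatorname{dcl}$ and $\operatorname{dcl}^{eq}$: the canonical base naturally resides among imaginaries, so the inclusion $\mathrm{Cb}(p)\subseteq \operatorname{dcl}(\mathcal{A})$ must be read inside $T^{eq}$ (or under elimination of imaginaries). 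Once that convention is fixed, the remaining steps are routine applications of the forking calculus in stable theories.
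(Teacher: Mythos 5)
Your proposal follows essentially the same route as the paper's proof: forward direction via Aut$(\mathfrak{C}/\mathcal{A})$-invariance of the defining schema forcing $\mathrm{Cb}(p)$ into $\operatorname{dcl}(\mathcal{A})$, converse via $\mathcal{A}$-definability implying nonforking, and the identification $\mathrm{Cb}(p)=\mathrm{bd}(p)$ through the fundamental order. You simply make explicit the defining-schema machinery and the $\operatorname{dcl}$ versus $\operatorname{dcl}^{\mathrm{eq}}$ convention that the paper leaves implicit, which is an improvement in precision rather than a different argument.

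One step deserves a flag, and it is present in both your writeup and the paper's: you assert that if $p$ does not fork over $\mathcal{A}$ then ``$p$ is the unique nonforking extension of $p\restriction\mathcal{A}$ to $B$,'' and from this deduce that every $\sigma\in\operatorname{Aut}(\mathfrak{C}/\mathcal{A})$ fixes $p$ and hence $d_p$. Uniqueness of the nonforking extension requires $p\restriction\mathcal{A}$ to be \emph{stationary} (e.g.\ $\mathcal{A}$ a model, or $\mathcal{A}$ algebraically closed in $\mathfrak{C}^{\mathrm{eq}}$), and this is not among the hypotheses. Without stationarity, a nonforking extension need only be fixed up to conjugacy, and the correct general conclusion is $\mathrm{Cb}(p)\subseteq\operatorname{acl}^{\mathrm{eq}}(\mathcal{A})$, not $\operatorname{dcl}(\mathcal{A})$; the $\operatorname{dcl}$ form of the statement characterizes definability over $\mathcal{A}$, which is strictly stronger than nonforking in general. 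So either add a stationarity hypothesis (or take $\mathcal{A}=\operatorname{acl}^{\mathrm{eq}}(\mathcal{A})$), or weaken the conclusion to $\operatorname{acl}^{\mathrm{eq}}$; as written, the implication ``nonforking $\Rightarrow$ invariance'' is the one genuinely unjustified step.
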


\begin{proof}
If \(p\) does not fork over \(\mathcal{A}\), then it is invariant under automorphisms fixing \(\mathcal{A}\), hence its canonical base lies in \(\operatorname{dcl}(\mathcal{A})\). The canonical base is the smallest definably closed set over which \(p\) is definable, and it therefore coincides with the bound. Conversely, if \(\mathrm{Cb}(p) \subseteq \operatorname{dcl}(\mathcal{A})\), then invariance over \(\mathcal{A}\) implies nonforking.
\end{proof}

\begin{corollary}\label{cor:cb-nf}
If \(p(x) \in S(\mathcal{A})\) is a stable type, then every coheir of \(p\) is definable and uniquely determined by its canonical base.
\end{corollary}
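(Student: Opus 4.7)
The plan is to combine Theorem~\ref{thm:cb-nf} with the classical fact that in a stable theory every complete type admits a defining schema, and to argue that finite satisfiability in $\mathcal{A}$ forces invariance over $\mathcal{A}$, so that coheirs of $p$ are nonforking extensions whose behavior is fully controlled by $\mathrm{Cb}(p)$. Since $p$ is stable, I will first invoke the existence of a defining schema $(d_p\varphi)_\varphi$ with parameters in $\mathrm{Cb}(p)$, so that $\varphi(x,\bar b) \in p$ iff $\models d_p\varphi(\bar b)$ for all $\bar b \in \mathcal{A}$. This immediately gives definability of $p$ itself, and sets up the template by which any nonforking extension will be shown to inherit definability.

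Next I would take an arbitrary coheir $q \in S(B)$ of $p$, where $B \supseteq \mathcal{A}$, and verify that $q$ does not fork over $\mathcal{A}$. The key observation is that finite satisfiability in $\mathcal{A}$ implies invariance under $\mathrm{Aut}(\mathfrak{C}/\mathcal{A})$: if some automorphism $\sigma$ fixing $\mathcal{A}$ moved a formula $\varphi(x,\bar b) \in q$ to $\varphi(x,\sigma\bar b) \notin q$, then realizability by elements of $\mathcal{A}$ would be violated, contradicting finite satisfiability. Invariance then yields nonforking, and by Theorem~\ref{thm:cb-nf}, $\mathrm{Cb}(q) \subseteq \operatorname{dcl}(\mathcal{A})$; moreover $\mathrm{Cb}(q) = \mathrm{Cb}(p)$ because $q$ is the nonforking extension of $p$ and canonical bases are preserved along nonforking extensions.

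Definability of $q$ then follows by transferring the schema: for any $\bar b \in B$, I would show $\varphi(x,\bar b) \in q$ iff $\models d_p\varphi(\bar b)$, using invariance to reduce the question of membership in $q$ to a condition over $\mathrm{Cb}(p)$. Uniqueness by the canonical base is then a direct consequence: if $q_1$ and $q_2$ are two coheirs of $p$ over $B$ sharing $\mathrm{Cb}(p)$, both are governed by the same schema $d_p$, so $\varphi(x,\bar b) \in q_1$ iff $\models d_p\varphi(\bar b)$ iff $\varphi(x,\bar b) \in q_2$, forcing $q_1 = q_2$.

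The main obstacle I expect is the situation in which $\mathcal{A}$ is not a model, since coheirs and nonforking extensions can diverge in that setting; the finite satisfiability argument above needs to be executed carefully so that the invariance conclusion does not silently require $\mathcal{A}$ to be an elementary submodel. I anticipate that the defining schema, being canonically attached to $\mathrm{Cb}(p) \subseteq \operatorname{dcl}(\mathcal{A})$, will allow the reduction to go through, but verifying that the coheir and the nonforking extension produced by the schema actually coincide (rather than merely agree on formulas over $\mathrm{Cb}(p)$) is the delicate step and will require combining stability, Property~B-style closure, and the uniqueness clause of Theorem~\ref{thm:cb-nf}.
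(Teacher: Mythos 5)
Your proposal follows essentially the same route as the paper's proof: observe that a coheir is finitely satisfiable in \(\mathcal{A}\), hence \(\mathcal{A}\)-invariant and therefore nonforking, then apply Theorem~\ref{thm:cb-nf} to place the canonical base in \(\operatorname{dcl}(\mathcal{A})\) and conclude definability and uniqueness via the defining schema over that base. You supply considerably more detail than the paper (the explicit invariance argument and the transfer of the schema \(d_p\varphi\)), and your caution about \(\mathcal{A}\) not being a model is a reasonable refinement, but the underlying argument is the same.
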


\begin{proof}
In stable theories, coheirs are nonforking extensions. By Theorem~\ref{thm:cb-nf}, the canonical base of a nonforking extension lies in \(\operatorname{dcl}(\mathcal{A})\). Since coheirs are invariant and definable over this base, they are uniquely determined.
\end{proof}

\subsection*{U-Rank and Superstability}

The U-rank is a central ordinal-valued rank that measures the complexity of types in terms of forking chains.

\begin{definition}
Let \(T\) be a stable theory. The \emph{U-rank} of a type \(p \in S(A)\), denoted \(U(p)\), is defined inductively:
\begin{enumerate}
    \item \(U(p) \geq \alpha+1\) if there exist two distinct forking extensions \(q_1, q_2\) of \(p\) with \(U(q_i)\geq \alpha\).
    \item \(U(p) \geq \lambda\) for limit \(\lambda\) if \(U(p)\geq \beta\) for all \(\beta<\lambda\).
    \item \(U(p)=\alpha\) if \(U(p)\geq \alpha\) and not \(U(p)\geq \alpha+1\).
\end{enumerate}
\end{definition}

\begin{proposition}
If \(U(p)<\infty\), then \(p\) is superstable.
\end{proposition}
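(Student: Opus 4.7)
The plan is to argue by contrapositive: suppose $p \in S(A)$ is not superstable; I will derive $U(p) = \infty$. Non-superstability of $p$ means there exists an infinite ascending sequence of parameter sets $A = A_0 \subseteq A_1 \subseteq A_2 \subseteq \cdots$ together with types $q_n \in S(A_n)$ satisfying $q_0 = p$ and each $q_{n+1}$ a forking extension of $q_n$. The task is to extract from this single chain enough branching to push $U(p)$ past every ordinal.

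I would prove by transfinite induction on $\alpha$ the statement: \emph{every type admitting an infinite forking chain above it has $U$-rank $\geq \alpha$}. The base case $\alpha = 0$ is immediate, and the limit case is handled directly by the third clause of the $U$-rank definition. The substantive case is the successor step. Given an infinite forking chain above $p$, I would produce two distinct forking extensions of $p$, each itself supporting an infinite forking chain, so that the inductive hypothesis yields $U$-rank $\geq \alpha$ on both branches and the successor clause of the definition delivers $U(p) \geq \alpha + 1$. To obtain the two branches, take $q_1$, the first forking extension from the chain, and apply an $A$-automorphism $\sigma$ of the monster model $\mathfrak{C}$; since $T$ is stable and $q_1$ forks over $A$, $q_1$ cannot be $A$-invariant (after extending $A$ to a small model and invoking the invariance characterization of nonforking), so $\sigma$ can be chosen with $\sigma(q_1) \neq q_1$. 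The image chain $\sigma(q_1) \prec \sigma(q_2) \prec \cdots$ is itself an infinite forking chain above $\sigma(q_1)$, furnishing the second branch.

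Once this induction is complete, the contrapositive gives: if $U(p)$ is an ordinal (equivalently, $U(p) < \infty$), then no infinite forking chain above $p$ can exist, which is precisely the definition of $p$ being superstable.

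The principal obstacle lies in justifying the branching at the successor step. The $U$-rank definition used in this paper requires \emph{two} distinct forking extensions of equal rank, whereas the classical definition asks for only one. This non-standard strengthening forces the argument to exploit the stability of $T$ and the rich automorphism group of $\mathfrak{C}$ to convert a linear forking chain into a genuinely bifurcating tree. The delicate point is ensuring that $q_1$ is not fixed by every $A$-automorphism, which requires either working over a sufficiently saturated model or invoking the standard equivalence between nonforking and invariance over models in the stable setting. Once branching is secured, the remainder is a routine well-foundedness argument on ordinals.
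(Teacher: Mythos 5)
Your proposal is correct in substance and is considerably more substantive than the paper's own argument. The paper disposes of the proposition in one sentence --- finiteness of $U$-rank makes forking chains terminate, plus an appeal to a ``boundedness'' characterization of superstability (which is really the multiplicity characterization of stability, not superstability) --- and never engages with its own nonstandard definition of $U$-rank. You, by contrast, prove the contrapositive by transfinite induction and confront the one real difficulty: clause (1) of the paper's definition demands \emph{two} distinct forking extensions of rank $\geq\alpha$, so a single infinite forking chain must be bifurcated, and your device of conjugating the chain by an $A$-automorphism $\sigma$ with $\sigma(q_1)\neq q_1$ is exactly the step the paper silently skips. The only spot to tighten is your justification that such a $\sigma$ exists: ``extending $A$ to a small model'' does not help, since $q_1$ may be nonforking (hence invariant) over that model even though it forks over $A$. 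The fact you need is stated directly over $A$: in a stable theory a global type forking over $A$ has unboundedly many $\mathrm{Aut}(\mathfrak{C}/A)$-conjugates, so a global nonforking extension of $q_1$ already has a distinct $A$-conjugate whose restriction gives the second branch; alternatively, if every $A$-automorphism fixed $q_1$, then its domain $A_1$ would lie in $\operatorname{acl}(A)$, making $q_1$ an algebraic and hence nonforking extension of $p$, a contradiction. With that repair, and granting that ``$p$ is superstable'' means no infinite forking chain issues from $p$ (the paper never defines the term, so your reading is the reasonable one under which the statement has content), your argument is complete and actually proves what the paper merely asserts.
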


\begin{proof}
Finiteness of U-rank implies that forking chains terminate, and the number of nonforking extensions of \(p\) over any set is bounded. This boundedness characterizes superstability (see~\cite{LascarPoizat}).
\end{proof}

\subsection*{FN Bases in Superstable Settings}

When a flat FN basis arises from a stable type of finite U-rank, its definability and isolation behavior follow from standard stability-theoretic results.

\begin{lemma}
Let \(p\) be a complete stable type of finite U-rank over a countable Boolean fragment \(\mathscr{F}(A)\). Then every nonforking extension of \(p\) is definable and isolated over its canonical base.
\end{lemma}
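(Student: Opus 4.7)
The plan is to treat definability and isolation separately, invoking stability for the first and finite $U$-rank for the second. Let $q$ be an arbitrary nonforking extension of $p$ to a set $B \supseteq A$. Since $T$ is stable, every complete type over $B$ is definable, so $q$ admits a defining schema. Because $q$ does not fork over $A$, and because the canonical base is preserved along nonforking extensions, we have $\mathrm{Cb}(q) = \mathrm{Cb}(p) \subseteq \operatorname{dcl}(A)$ by Theorem~\ref{thm:cb-nf}. The defining schema of $q$ may therefore be chosen with parameters from $\mathrm{Cb}(p)$, which is the content of the definability half of the lemma and aligns with Corollary~\ref{cor:cb-nf}.

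For isolation, I would exploit the finiteness of $U(p)$. Since $U(p)<\infty$ the ambient theory is superstable, and $U(q)=U(p)$ by the Lascar equality on nonforking extensions. The number of nonforking extensions of $p$ over $\mathrm{acl}(\mathrm{Cb}(p))$ is exactly the multiplicity of $p$, which is finite under the finite-$U$-rank hypothesis. By the finite equivalence relation theorem, these finitely many parallel classes are separated by a $\mathrm{Cb}(p)$-definable finite equivalence relation $E(x,y)$, and the $E$-class of any realization of $q$ is definable over $\mathrm{Cb}(p)$. Conjoining this class formula with the defining schema of $q$ (also over $\mathrm{Cb}(p)$) exhibits a single formula that isolates $q$ in $S(B)$.

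The main obstacle is the isolation step. Definability drops out immediately from stability plus Theorem~\ref{thm:cb-nf}; the delicate point is that in a general stable theory only stationary types enjoy unique nonforking extensions, so one cannot simply read off a single isolating formula over $\mathrm{Cb}(p)$. The finite-$U$-rank hypothesis is precisely what bounds the multiplicity and permits invocation of the finite equivalence relation theorem, and some care is required to check that this theorem transports to the FN-basis setting over a countable Boolean fragment, where Property~B supplies the necessary closure under logical consequence. Once the finite equivalence relation is secured, the remaining step of combining its class formula with the defining schema to produce a single isolating formula is routine.
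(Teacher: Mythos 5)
Your overall route mirrors the paper's own sketch: definability from stability plus Theorem~\ref{thm:cb-nf}, and isolation from finite $U$-rank via finiteness of the set of nonforking (parallel) extensions; you add the finite equivalence relation theorem as the mechanism that the paper leaves implicit. The definability half is essentially fine. The problem is the pivotal claim in your isolation step, namely that the multiplicity of $p$ ``is finite under the finite-$U$-rank hypothesis.'' This is false in general, and it is the same unsupported assertion on which the paper's two-sentence proof rests (``the number of nonforking extensions is finite''). The standard counterexample is the superstable theory of infinitely many refining equivalence relations $E_0 \supseteq E_1 \supseteq \cdots$, each $E_n$-class splitting into finitely many $E_{n+1}$-classes: the generic type over $\emptyset$ has $U$-rank $1$ but $2^{\aleph_0}$ strong-type (hence nonforking) extensions. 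The finite equivalence relation theorem only separates any two given parallel classes by some finite $A$-definable equivalence relation; when there are infinitely many classes it does not produce a single formula over $\mathrm{Cb}(p)$ picking out the class of $q$, so your final ``conjoin the class formula with the defining schema'' step has nothing to conjoin. Finite multiplicity would need to be added as a hypothesis, or replaced by $\omega$-stability / total transcendence (where multiplicity is always finite), or by stationarity of $p$.

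A secondary concern: you read ``isolated'' as ``isolated in $S(B)$ by a single formula,'' i.e.\ principal. Even granting finite multiplicity, that conclusion is too strong: the defining schema of $q$ is an infinite family of formulas, and conjoining one class formula with it does not yield a principal type (compare the paper's own Example~4, where the type of a transcendental element over a model of $\mathrm{ACF}_p$ is called ``isolated over its canonical base'' although it is manifestly non-principal --- no single formula implies transcendence). Whatever the paper intends by ``isolated over its canonical base,'' it cannot be principality in $S(B)$, so you should either weaken the target (e.g.\ to: $q$ is the unique nonforking extension of its restriction to $\mathrm{acl}(\mathrm{Cb}(p))$, determined by finitely many strong-type data plus the definition) or flag that the literal statement is unattainable.
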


\begin{proof}
Finite U-rank ensures that forking chains terminate and that the number of nonforking extensions is finite. Each such extension is definable and, in the superstable setting, isolated by a finite conjunction of formulas over its canonical base~\cite{LascarPoizat}.
\end{proof}

\begin{corollary}
Let \(p \subseteq \mathscr{F}(A)\) be a flat and definable FN basis in a superstable theory. Then any ultrapower \(p^{\mathcal{U}}\), formed via a good or regular ultrafilter, is isolated over its canonical base.
\end{corollary}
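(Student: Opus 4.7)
The plan is to combine the ultrafilter-level preservation results from Section~\ref{sec:ultrafilters} with the superstability input from the lemma immediately preceding the corollary, reducing the statement to an application of Theorem~\ref{thm:cb-nf} and Corollary~\ref{cor:cb-nf}. First I would note that, by the results of Section~\ref{sec:ultrafilters} (specifically the theorem that good ultrafilters preserve flatness and nonforking, and the proposition that regular ultrafilters preserve flatness, definability, and nonforking), the ultrapower \(p^{\mathcal{U}}\) is a flat, definable, nonforking extension of \(p\) over \(A\). In particular, since \(p=p^{\mathcal{U}}\) as filters in this constant-sequence setting, \(p^{\mathcal{U}}\) inherits Property~B and local definability from \(p\).

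Next I would invoke the hypothesis that \(T\) is superstable. Together with the flatness and definability of \(p\), this places \(p\) (and hence \(p^{\mathcal{U}}\)) in the regime where \(U\)-rank analysis applies: the forking chains below \(p\) terminate, and the number of nonforking extensions of \(p\) over any set is bounded. Thus \(p^{\mathcal{U}}\) may be treated as (the FN analogue of) a stable type of finite \(U\)-rank, so the preceding lemma delivers that every nonforking extension of \(p^{\mathcal{U}}\) — and in particular \(p^{\mathcal{U}}\) itself, viewed as its own trivial extension — is definable and isolated over its canonical base.

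To close the argument, I would apply Theorem~\ref{thm:cb-nf} of Lascar--Poizat to locate \(\mathrm{Cb}(p^{\mathcal{U}})\) inside \(\operatorname{dcl}(A)\), which is possible since \(p^{\mathcal{U}}\) does not fork over \(A\) by the earlier ultrafilter theorems. Corollary~\ref{cor:cb-nf} then identifies \(p^{\mathcal{U}}\) with its unique definable extension over \(\mathrm{Cb}(p^{\mathcal{U}})\), and the finite-\(U\)-rank lemma yields a finite conjunction of formulas over \(\mathrm{Cb}(p^{\mathcal{U}})\) that isolates \(p^{\mathcal{U}}\) in the relevant Stone-type space of FN filters.

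The main obstacle I anticipate is not any of the transfers themselves, which are already packaged in the earlier results, but rather justifying that the finite-\(U\)-rank lemma can be legitimately applied to \(p^{\mathcal{U}}\) even though the lemma was stated for complete stable types. One needs to check that a flat, definable FN basis with Property~B in a superstable theory behaves, for the purposes of canonical bases and isolation, like a complete stable type — most cleanly by passing to the complete type determined by \(p^{\mathcal{U}}\) on the relevant Boolean fragment and verifying that definability and the canonical base agree under this correspondence. Once that bridge is in place, the isolation conclusion is immediate.
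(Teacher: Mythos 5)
Your proposal follows essentially the same route as the paper: the paper's own proof likewise cites the preservation results of Sections~\ref{sec:flatness-nonforking} and~\ref{sec:ultrafilters} to give flatness, definability, and local purity of \(p^{\mathcal{U}}\), and then appeals to preservation of finite \(U\)-rank to conclude isolation over the canonical base. The bridge you flag at the end --- that the finite-\(U\)-rank lemma is stated for complete stable types rather than FN bases, and that superstability alone yields only ordinal-valued rather than finite \(U\)-rank --- is glossed over in the paper's two-sentence proof as well, so your version is, if anything, more explicit about the work that remains.
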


\begin{proof}
By Sections~\ref{sec:flatness-nonforking} and~\ref{sec:ultrafilters}, the ultrapower \(p^{\mathcal{U}}\) inherits flatness, definability, and local purity from \(p\). The preservation of finite U-rank in ultrapowers ensures that \(p^{\mathcal{U}}\) is isolated over its canonical base.
\end{proof}

\subsection*{Ultralimits and Coheirs}

In superstable settings, ultralimits of definable types formed via countably incomplete ultrafilters correspond precisely to coheirs.

\begin{proposition}
Let \(p(x)\in S(A)\) be a definable stable type. Then every coheir of \(p\) over a set \(B \supseteq A\) arises as an ultralimit of \(p\) via a countably incomplete ultrafilter.
\end{proposition}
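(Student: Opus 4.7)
The plan is to use the uniqueness of coheirs in the stable definable setting and then realize this unique coheir explicitly as a diagonal ultralimit of realizations of $p$ drawn from $A$. Let $q\in S(B)$ be an arbitrary coheir of $p$. First I would invoke Corollary~\ref{cor:cb-nf} together with the definability hypothesis on $p$ to conclude that $q$ coincides with the unique nonforking extension of $p$ to $B$ and is itself definable via the same schema $\{d_p\varphi(\bar y):\varphi(x,\bar y)\in L\}$ with parameters lying in $\operatorname{dcl}(A)$; in particular, $q$ is fully determined by the pair $(p,d_p)$.

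Next I would set up the countable bookkeeping needed to land on an ultrafilter over $\omega$. Since $L$ is countable, enumerate its formulas as $\varphi_k(x,\bar y_k)$, fix an enumeration of finite tuples $\bar b$ from $B$, and for each $n<\omega$ form a finite $\Sigma_n\subseteq q$ listing, with the correct sign dictated by $d_p$, every test clause $\varphi_k(x,\bar b)^{\epsilon}$ arising from the first $n$ formulas applied to the first $n$ listed tuples. Each $\Sigma_n$ is a finite subset of $q$, so by finite satisfiability of the coheir in $A$ I may choose $a_n\in A$ realizing $\Sigma_n$.

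Now let $\mathcal{U}$ be any non-principal ultrafilter on $\omega$ extending the cofinite filter; such $\mathcal{U}$ is automatically countably incomplete. By the nesting of the $\Sigma_n$, for every test clause $\varphi_k(x,\bar b)^{\epsilon}\in q$ the set $\{n:\models \varphi_k(a_n,\bar b)^{\epsilon}\}$ is cofinite and hence belongs to $\mathcal{U}$. Consequently the ultralimit $\lim_{\mathcal{U}}\operatorname{tp}(a_n/B)$ matches $q$ on every formula referencing finitely many $B$-parameters and therefore equals $q$. Since each $a_n\in A$ already satisfies the fragment of $p$ occurring inside $\Sigma_n$, the restriction of this ultralimit to $\mathscr{F}(A)$ returns $p$, so the construction is genuinely an ultralimit of $p$ along $\mathcal{U}$.

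The hardest point is controlling the cardinality of the index set: pure finite satisfiability only produces realizations indexed by finite subsets of $q$, whose quantity is unbounded when $B$ is uncountable, while the statement demands an $\omega$-indexed ultrafilter. Overcoming this is exactly where the definability hypothesis does its work, compressing the behavior of $q$ into the countable schema $d_p$ so that a diagonal enumeration by $\omega$ suffices. Verifying that the resulting countable ultralimit captures all of $q$, rather than merely a countable sub-fragment, is the delicate step, and is precisely where stability together with definability enters decisively.
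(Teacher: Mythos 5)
Your construction follows the same route the paper itself sketches: realize increasing finite fragments of the coheir by elements \(a_n\in A\), pass to the limit along a nonprincipal (hence countably incomplete) ultrafilter on \(\omega\), and use definability and uniqueness to identify the limit with \(q\). You carry this out in far more detail than the paper's three-sentence argument, and the skeleton is sound. The one step you leave open is the one you flag yourself: the diagonalization only guarantees that \(\lim_{\mathcal{U}}\operatorname{tp}(a_n/B)\) agrees with \(q\) on the countably many clauses \(\varphi_k(x,\bar b)\) you enumerated, and when \(B\) is uncountable that is not all of \(q\). But you already hold the tool that closes it, and it is not the "delicate" definability argument you gesture at in the last paragraph. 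Since every \(a_n\) lies in \(A\), the ultralimit is automatically finitely satisfiable in \(A\); since its restriction to \(\mathscr{F}(A)\) equals \(p\) (here you do need the countability of \(A\) and of the language, standing assumptions of the paper, so that the \(\Sigma_n\) exhaust \(p\) and the two complete types over \(A\) coincide), the ultralimit is itself a coheir of \(p\) over \(B\). The uniqueness you already extracted from Corollary~\ref{cor:cb-nf} then forces it to equal \(q\), with no need to match \(q\) clause by clause over all of \(B\). Note also that this same uniqueness means there is only one coheir to account for, so exhibiting a single ultralimit that is a coheir of \(p\) proves the proposition in full.
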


\begin{proof}
A coheir is a type over \(B\) finitely satisfiable in \(A\), and can be viewed as the limit of a sequence of realizations of \(p\) in \(A\). Taking an ultralimit via a countably incomplete ultrafilter captures this pattern of finitely satisfiable convergence. Since \(p\) is definable, the ultralimit corresponds uniquely to the coheir.
\end{proof}

\noindent These model-theoretic refinements demonstrate that definable FN bases preserve their essential properties, flatness, nonforking, and definability under ultrapowers and coheir extensions, even in the absence of CH. In stable and superstable theories, these bases are additionally isolated over their canonical bases, reinforcing their coherence and robustness across both algebraic and semantic perspectives.

\section{Examples and Counterexamples}
\label{sec:examples}

This section provides concrete illustrations and boundary cases that both support and delineate the results established earlier.

\subsection*{Example 1: A Definable FN Basis with Flat Ultrapower (ZFC)}

Let \(T\) be the theory of dense linear orders without endpoints (DLO), which is complete, \(\omega\)-stable, and admits quantifier elimination. Let \(M \models T\) be a countable model, and define the type
\[
p(x) := \{x > a : a \in M\}.
\]
This is a definable, complete 1-type over \(M\), and hence induces a definable FN basis over the Boolean fragment \(\mathscr{F}(M)\) consisting of formulas of the form \(x > a\) or \(x < a\).

Let \(\mathcal{U}\) be any countably incomplete ultrafilter on \(\omega\). Then the ultrapower \(p^{\mathcal{U}}\) over \(M^{\mathcal{U}}\) corresponds to the same upward type over the ultrapower model. Since DLO is stable and \(p\) is definable, both flatness and nonforking are preserved, demonstrating the ZFC validity of Theorem~\ref{thm:ft-nf} without any appeal to CH.

\subsection*{Example 2: Failure of Nonforking Without Property~B}

Let \(T\) be the theory of the random graph, which is unstable but complete and countable. Let \(M \models T\) be a countable model, and define a filter \(p \subseteq \mathscr{F}(M)\) by including all formulas of the form:
\[
\neg R(x,a) \quad \text{for all } a \in M, \quad \text{and} \quad R(x,b_1)\wedge R(x,b_2)\quad \text{for } b_1,b_2 \in M.
\]
While \(p\) is finitely satisfiable, it fails to satisfy Property~B. For instance, certain implications of the form
\[
\left(\bigwedge_{i=1}^n R(x,b_i)\right)\rightarrow R(x,c)
\]
may hold in \(T\), yet \(R(x,c)\notin p\). Thus, \(p\) is not closed under logical consequence and lacks the semantic closure required by strong Property~B.

Even when forming an ultrapower \(p^{\mathcal{U}}\) via a good ultrafilter, definability and local purity fail to transfer. This example shows that Property~B is essential for ensuring that flatness leads to definability and nonforking behavior in ultrapowers.

\subsection*{Example 3: Dependence on Regularity of Ultrafilter}

Let \(T\) again be DLO, and let
\[
p(x):=\{x > a : a \in A\}
\]
for some countable \(A \subseteq M \models T\). Let \(\mathcal{U}\) be a countably incomplete but non-regular ultrafilter on \(\omega\) (e.g., the cofinite filter extended to an ultrafilter).

Form the ultrapower \(p^{\mathcal{U}}\). Since \(\mathcal{U}\) lacks regularity, saturation properties may fail, and with them the uniqueness of nonforking heirs. Even though \(p\) is definable and flat, \(p^{\mathcal{U}}\) may be non-isolated and admit multiple nonforking extensions, demonstrating the necessity of regularity (see Section~\ref{sec:ultrafilters}) for canonical behavior in ultrapowers.

\subsection*{Example 4: Superstable Theory Without CH}

Let \(T = \mathrm{ACF}_p\) be the theory of algebraically closed fields of characteristic \(p > 0\), with countable transcendence degree. This theory is superstable and eliminates quantifiers.

Let \(M \models T\) be a countable model, and let \(p(x)\in S_1(M)\) be the type of a transcendental element over \(M\). This type is definable and has finite U-rank. It therefore induces a flat, definable FN basis over \(\mathscr{F}(M)\).

By Corollary~\ref{cor:cb-nf}, any ultrapower \(p^{\mathcal{U}}\), formed via a good or regular ultrafilter, remains isolated and definable over its canonical base. This demonstrates that in superstable theories, definability and independence behavior of FN bases are preserved without requiring CH, highlighting the strength of intrinsic model-theoretic conditions over set-theoretic ones.

\subsection*{Summary}

These examples collectively illustrate the robustness and limitations of the definability theory for FN bases:
\begin{itemize}
    \item Examples~1 and~4 confirm that flat, definable FN bases satisfying Property~B behave well under ultrapowers and do not require CH.
    \item Example~2 shows that without Property~B, logical closure and definability may fail even when flatness holds.
    \item Example~3 shows that non-regular ultrafilters may undermine isolation and uniqueness properties, despite flatness and definability of the original basis.
\end{itemize}

These boundaries clarify the roles of Property~B, ultrafilter structure, and stability-theoretic conditions in supporting Ng’s definability program within ZFC.

\section{Conclusion and Open Questions}
\label{sec:conclusion}

In this paper, we have revisited the theory of definable FN bases, initially developed by Ng, and explored the extent to which the structural properties of flatness and nonforking can be recovered in ZFC without the Continuum Hypothesis (CH). We showed that several of the results in~\cite{Ng}, particularly Theorem~3.4 and Corollary~3.5, can be reestablished within ZFC by replacing CH with combinatorial properties of ultrafilters, such as countable incompleteness, goodness, and regularity.

The key conclusions of this investigation include:
\begin{itemize}
    \item Countably incomplete ultrafilters suffice to preserve flatness in ultrapowers of definable FN bases over countable Boolean fragments.
    \item Good ultrafilters ensure the existence of nonforking extensions and definability in ultrapowers without CH.
    \item Regular ultrafilters preserve coherence and yield unique primary heirs in ultrapowers of definable types.
    \item Property~B plays a central role in transferring coherence and definability across extensions and is indispensable for recovering nonforking behavior.
    \item The interaction between canonical bases, U-rank, and superstability yields definability and isolation of ultralimits in stable and superstable settings.
\end{itemize}

Overall, our findings suggest that many model-theoretic features of definable FN bases are recoverable in ZFC using purely structural and ultrafilter-theoretic techniques. While CH remains a powerful tool in the construction of good ultrafilters, its role can be substantially weakened by identifying internal conditions sufficient for definability and independence.

\subsection*{Open Questions}

We conclude with several open questions that may guide future work in this direction:
\begin{enumerate}
    \item Can the framework of FN bases be extended meaningfully to NIP or simple theories, where the structure of forking is less rigid?
    \item Are there intrinsic characterizations of Property~B that do not rely on syntactic closure but instead reflect deeper semantic or categorical properties?
    \item Under what conditions does the collection of definable FN bases form a lattice or frame, and how does this relate to the geometry of types?
    \item Can the ultrafilter-theoretic conditions explored here be generalized to ultraproducts over higher cardinalities or in non-elementary classes?
    \item Is there a categorical or topological refinement of flatness e.g., via sheaf-theoretic or locale-theoretic methods that would generalize the notion of ultrafilter coherence in FN frameworks?
\end{enumerate}

\subsection*{Acknowledgements}

The author gratefully acknowledges the late Professor Siu-Ah Ng, whose postgraduate lectures in Model Theory and Nonstandard Analysis at the University of KwaZulu-Natal (2009-2010) deeply influenced the ideas and questions pursued in this work. His clarity of thought and commitment to foundational model theory continue to inspire.

\end{document}